\newtheorem{theorem}{Theorem}[section]
\newtheorem{theorem/definition}{Theorem/definition}[section]
\newtheorem{corollary}[theorem]{Corollary}
\newtheorem{lemma}[theorem]{Lemma}
\newtheorem{proposition}[theorem]{Proposition}
\theoremstyle{definition}
\newtheorem{observation}[theorem]{Observation}
\newtheorem{remark}[theorem]{Remark}
\newtheorem{algorithm}[theorem]{Algorithm}
\newtheorem{example}{Example}[section]
\newtheorem{definition}[theorem]{Definition}
\title[Quipu quivers and Nakayama algebras]{Quipu quivers and Nakayama algebras with almost separate relations}
\author{Didrik Fosse}
\begin{document}

\begin{abstract}
A Nakayama algebra with almost separate relations is one where the overlap between any pair of relations is at most one arrow. 
In this paper we give a derived equivalence between such Nakayama algebras and path algebras of quivers of a special form known as quipu quivers. Furthermore, we show how this derived equivalence can be used to produce a complete classification of linear Nakayama algebras with almost separate relations.

As an application, we include a list of the derived equivalence classes of all Nakayama algebras of length $\leq 8$ with almost separate relations.

\end{abstract}

\maketitle

\section{Introduction}
Throughout this paper $k$ will be a field and $A_n$ will refer to the linearly ordered quiver of Dynkin type $\mathbb{A}$ with $n$ vertices labelled $1,\ldots, n$.
We will denote by $A_{n,(n_0,\ldots,n_r)}^{(l_0,\ldots, l_r)}$ the linear Nakayama algebra $kA_n/I$ where $I$ is the ideal generated by a relation of length $l_i\geq 2$ beginning in vertex $n_i$ for each $0\leq i\leq r$. We assume that $n_i < n_{i+1}$ and $n_i+l_i< n_{i+1}+l_{i+1}$ for each $0\leq i \leq r-1$, and that $n_r+l_r\leq n$. 

Our focus will be on Nakayama algebras where the overlap between any pair of relations is at most one arrow, meaning $n_{i+1}\geq n_i+l_i-1$ for all $i$, which we will refer to as the algebra having \emph{almost separate relations}.

In order to investigate the derived equivalence classes of these algebras, we will employ the combinatorial quiver mutation developed in \cite{Fosse2022}.

We will show that there exist derived equivalences between linear Nakayama algebras with almost separate relations and so-called quipu algebras. These are path algebras of so-called quipu quivers, whose underlying graphs are what's known as open quipus.
An \emph{open quipu} (or simply quipu) is a tree of maximum degree $3$, such that all vertices of degree $3$ lie on a path. They were introduced by Woo and Neumaier in \cite{WN2007} as an example of graphs with small spectral radius. The name comes from their shape, a line with straight cords going out of it, which is reminiscent of the knotted strings used by the Incas for record keeping purposes. 

The main result of this paper is the existence of derived equivalences between quipu algebras and Nakayama algebras with almost separate relations, as stated in the following theorem. The notation is explained in \cref{section:quipus}. 

\begin{theorem}[\Cref{thm:QuipuToAn}]
Let $P_{(k_0,k_1,\ldots,k_r,k_{r+1})}^{(m_0,m_1,\ldots,m_r)}$ be a quipu, and let 
$$n_i = k_0 + \sum\limits_{j=1}^{i} (m_{j-1} + k_{j} + 1) \text{ for } 1\leq i\leq r+1.$$ 
If $D$ is a quipu quiver, then there exists a derived equivalence
$$kD\simeq A_{n_{r+1},(k_0,n_1,n_2,\ldots, n_{r} )}^{(m_{0}+2, m_{1}+2, \ldots, m_{r}+2)}$$
if and only if $D$ is $P_{(k_0,k_1,\ldots,k_r,k_{r+1})}^{(m_0,m_1,\ldots,m_r)}$ with some orientation.
\end{theorem}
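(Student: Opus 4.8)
The plan is to combine two standard reductions with the combinatorial mutation calculus of \cite{Fosse2022}. First, since $kD$ is hereditary and the underlying graph of any quipu quiver is a tree, every reorientation of that tree gives a path algebra derived equivalent to $kD$: each reflection at a sink or source is a derived equivalence (BGP/APR reflection functors), and any two orientations of a tree are linked by such a sequence. Hence the ``if'' direction amounts to producing a single derived equivalence $kD_0 \simeq A_{n_{r+1},(k_0,n_1,\ldots,n_r)}^{(m_0+2,\ldots,m_r+2)}$ for one convenient orientation $D_0$ of $P_{(k_0,\ldots,k_{r+1})}^{(m_0,\ldots,m_r)}$, together with the (immediate) observation that the underlying graph of $P_{(k_0,\ldots,k_{r+1})}^{(m_0,\ldots,m_r)}$ is indeed an open quipu in the sense of \cref{section:quipus}. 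Dually, for the ``only if'' direction it suffices, after composing with the equivalence just described, to show that two quipu quivers with derived equivalent path algebras share the same underlying open quipu.

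For the derived equivalence itself I would induct on the number $r$ of cords. In the base case $r=0$ the quipu is a three-legged star with legs of lengths $k_0$, $k_1$ and $m_0$, and a single mutation sequence from \cite{Fosse2022} that ``folds'' the cord of length $m_0$ into the spine produces the linear Nakayama algebra $A_{n_1,(k_0)}^{(m_0+2)}$; the shift by $2$ is forced by the two arrows flanking the attachment point of the cord once the pendant path is absorbed into the line. For the inductive step, orient $D_0$ so that the cord attached at the $r$-th branch vertex admits the same local fold; carrying it out turns $kD_0$ into $kD_0'$, where $D_0'$ is (an orientation of) the quipu with one fewer cord, but now carrying a separated zero-relation of length $m_r+2$ inserted at the $r$-th branch vertex. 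Tracking how the vertices are renumbered under this fold is exactly the content of the formula $n_i = k_0 + \sum_{j=1}^{i}(m_{j-1}+k_j+1)$; one checks that the almost-separate inequalities $n_{i+1}\ge n_i+l_i-1$ are preserved, so the induction hypothesis applies to the remaining $r-1$ cords and closes the argument.

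For the converse, write $kD \simeq kP$ with $P = P_{(k_0,\ldots,k_{r+1})}^{(m_0,\ldots,m_r)}$. Both algebras are hereditary, so I would compare a complete numerical derived invariant for hereditary algebras, namely the $\mathbb{Z}$-congruence class of the Euler (Tits) form, equivalently the rank together with the Coxeter polynomial. For an open quipu this data records the multiset of leg and cord lengths together with their positions along the spine, from which the quipu is reconstructed. Since the Coxeter polynomial does not separate arbitrary trees, the delicate point here is to rule out accidental coincidences among the quipus actually occurring, which is where one must exploit the rigidity of open quipus (all branch vertices on a path, maximal degree $3$); alternatively one may argue directly inside the mutation framework of \cite{Fosse2022}.

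The step I expect to be the main obstacle is the explicit fold in the inductive step: pinning down an orientation of the quipu and a precise list of mutation vertices in the sense of \cite{Fosse2022} whose net effect is to convert one cord into a single separated relation while leaving the rest of the algebra intact up to the relabelling above, and then confirming that this never produces an overlap of more than one arrow between relations. Everything else should be either classical (reflection functors) or routine bookkeeping with the index formula.
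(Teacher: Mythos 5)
Your outline matches the architecture of the paper's argument---prove the equivalence for one convenient orientation, use invariance of the derived class under reorientation of a tree to cover the remaining orientations, and a converse uniqueness statement for the ``only if'' direction---but the two steps you leave open are genuine gaps rather than routine bookkeeping. First, the ``explicit fold'' you defer is the entire technical content of the result. In the paper this is the cord/relation-swap (\cref{algorithm:CRswap} and the proposition following it) together with \cref{lemma:firstRelToCord}, and it is \emph{not} a single local move at the branch vertex: converting one cord into a relation (or vice versa) requires transporting the relation step by step along the whole initial segment of the main string, one CR-swap per vertex, and each swap temporarily destroys and then recreates every cord it passes over; each individual swap is itself a sequence of roughly $l+m-1$ tilting mutations whose effect on neighbouring (optional) relations must be tracked explicitly. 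The claim that the rest of the quiver is ``left intact up to relabelling'' is exactly what the long computation in \cref{section:CR-swap} establishes, including the verification that almost-separateness is preserved. Without that computation your induction has neither a base case nor an inductive step. A related issue: after one fold the intermediate object is no longer hereditary, so your induction hypothesis as stated does not apply to it; the paper's notion of a quipu quiver \emph{with almost separate relations} is what makes the iteration well-posed.

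Second, for the ``only if'' direction the Coxeter polynomial is not a workable invariant as you present it: you correctly note that it does not separate arbitrary trees, and you do not verify that it separates open quipus, so nothing is proved. The paper does not attempt this; via \cref{lemma:TreeOrientation} it invokes the proof of Corollary 4 of \cite{CK2006} for the statement that derived equivalent quipu algebras have the same underlying quipu. Either you need that external result, or you need an actual argument that the $\mathbb{Z}$-congruence class of the Euler form (not merely the Coxeter polynomial) reconstructs an open quipu; the latter is not routine. The ``if'' direction across orientations via reflection functors is fine and agrees with the paper's use of \cite{AO2013}, and the index formula $n_i=k_0+\sum_{j=1}^{i}(m_{j-1}+k_j+1)$ is indeed just bookkeeping once the swap is in place.
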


Using this theorem, we are able to give a complete classification up to derived equivalence of linear Nakayama algebras with almost separate relations. Each equivalence class is determined by the shape of a given quipu, with the cords of the quipu corresponding to the relations of the Nakayama algebras in a certian way. We will go through the details in \cref{section:QuipusAndNakayamaAlgebras}.

Relations of length $2$ do not affect the derived equivalence class of a Nakayama algebra with almost separate relations, and the number of almost separate relations of length $\geq 3$ is a derived invariant of such algebras. The length and position of each relation of length $\geq 3$ is also a derived invariant, except for (possibly) the first and last such relations along the quiver.

The precise collection of Nakayama algebras included in each of these derived equivalence classes is stated in the following theorem. Notice that all Nakayama algebras with almost separate relations in a given class can be obtained from each other by a combination of a few simple operations. The operations are:
\begin{itemize}
\item Changing the length and end vertex (resp. start vertex of) the first (resp. last) relation by a certain amount.
\item Adding or removing relations of length $2$.
\item Reversing all arrows in the quiver.
\end{itemize}

This works because all Nakayama algebras in a class correspond to orientations of the same quipu, and the operations correspond to changing that orientation.

\begin{theorem}[\Cref{cor:EquivNakayamaAlgebras}]
If $A_{n,(n_0,\ldots,n_r)}^{(l_0,\ldots, l_r)}$ is a linear Nakayama algebra with almost separate relations, where $l_i\geq 3$ for all $i$, then the following Nakayama algebras are derived equivalent.
\begin{align*}
&A_{n,(n_0,n_1,\ldots,n_{r-1},n_r)}^{(l_0, l_1, \ldots, l_{r-1}, l_r)}  &&A_{n,(n-n_r-l_r+1,n-n_{r-1}-l_{r-1}+1,\ldots,n-n_0-l_0+1)}^{(l_r, l_{r-1}, \ldots, l_{1}, l_0)}\\
&A_{n,(n_0,n_1,\ldots,n_{r-1},n_r)}^{(l_0, l_1, \ldots, l_{r-1}, n-n_r-l_r+3)} &&A_{n,(l_r-2,n-n_{r-1}-l_{r-1}+1,\ldots,n-n_0-l_0+1)}^{(n-n_r-l_r+3, l_{r-1}, \ldots, l_{1}, l_0)}    \\
&A_{n,(l_0-2,n_1,\ldots,n_{r-1},n_r)}^{(n_0+2, l_1, \ldots, l_{r-1}, l_r)}  && A_{n,(n-n_r-l_r+1,n-n_{r-1}-l_{r-1}+1,\ldots,n-n_0-l_0+1)}^{(l_r, l_{r-1}, \ldots, l_{1}, n_0+l_0)}\\
&A_{n,(l_0-2,n_1,\ldots,n_{r-1},n_r)}^{(n_0+2, l_1, \ldots, l_{r-1}, n-n_r-l_r+3)} &&A_{n,(l_r-2,n-n_{r-1}-l_{r-1}+1,\ldots,n-n_0-l_0+1)}^{(n-n_r-l_r+3, l_{r-1}, \ldots, l_{1}, n_0+l_0)}. 
\end{align*}
Also, adding a set of almost separate relations of length $2$ to available positions in any of these algebras will not change the derived equivalence class.
\end{theorem}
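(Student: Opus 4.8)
The strategy is to deduce the whole statement from \Cref{thm:QuipuToAn} by checking that the eight algebras in the list are attached, through that theorem, to orientations of one and the same quipu. First I would run the dictionary of \Cref{thm:QuipuToAn} backwards. Given $A_{n,(n_0,\ldots,n_r)}^{(l_0,\ldots,l_r)}$ with all $l_i\ge 3$, set $m_i=l_i-2\ (\ge 1)$, $k_0=n_0$, $k_i=n_i-n_{i-1}-l_{i-1}+1$ for $1\le i\le r$, and $k_{r+1}=n-n_r-l_r+1$. The hypotheses ``almost separate'' and $l_i\ge 3$ are exactly what forces $k_i\ge 0$ for $1\le i\le r$ and $k_0,k_{r+1}\ge 1$, so that $Q:=P_{(k_0,\ldots,k_{r+1})}^{(m_0,\ldots,m_r)}$ is a legitimate quipu and the recovery formula $n_i=k_0+\sum_{j=1}^{i}(m_{j-1}+k_j+1)$ holds; hence $A_{n,(n_0,\ldots,n_r)}^{(l_0,\ldots,l_r)}\simeq kD$ for $D$ any orientation of $Q$.

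Next comes the combinatorial heart: the underlying graph of $Q$ is unchanged under the three involutions on the parameter data given by (a) reversing both tuples simultaneously (``traverse the spine in the opposite direction''), (b) swapping $k_0\leftrightarrow m_0$ (``at the first branch vertex, interchange the two pendant paths, namely the left-hand end of the spine and the first cord''), and (c) swapping $k_{r+1}\leftrightarrow m_r$ (the same at the last branch vertex). Each of these is merely a relabelling of a fixed tree. For $r\ge 1$ they generate a group of order $8$; for $r=0$ the group is smaller and some of the eight listed algebras simply coincide, and likewise whenever a swap happens to fix the tuple, which is harmless. For every tuple $t$ in the orbit of $\big((m_i),(k_j)\big)$ the constraints of \Cref{thm:QuipuToAn} again hold, so the theorem produces a Nakayama algebra $A_t$ with $A_t\simeq kD_t$ for $D_t$ an orientation of the quipu underlying $t$; since that quipu is isomorphic as a graph to $Q$, transporting $D_t$ along such an isomorphism yields an orientation $D_t'$ of $Q$ with $kD_t\cong kD_t'$, and \Cref{thm:QuipuToAn} applied to $Q$ gives $kD_t'\simeq A_{n,(n_0,\ldots,n_r)}^{(l_0,\ldots,l_r)}$. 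Chaining these equivalences, all the $A_t$ are derived equivalent.

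It then remains to identify the $A_t$ with the algebras in the statement. Pushing the involutions through the dictionary, (b) replaces the first relation $(n_0,l_0)$ by $(l_0-2,\ n_0+2)$, (c) replaces the last relation $(n_r,l_r)$ by $(n_r,\ n-n_r-l_r+3)$ with the same starting vertex, and (a) sends the $i$-th relation $(n_i,l_i)$ to $(n-n_{r-i}-l_{r-i}+1,\ l_{r-i})$ and reverses their order; composing these in the $8$ possible ways, and checking each composite is again a linear Nakayama algebra with almost separate relations, reproduces the displayed list. For the last assertion, note that a relation of length $2$ is precisely one with $m_i=0$, i.e.\ a cord of length $0$; such a cord is invisible in the underlying graph (its branch vertex is an ordinary degree-$2$ spine vertex), so inserting or deleting relations of length $2$ at positions compatible with the linear, almost-separate hypotheses changes neither $Q$ nor---by one further application of \Cref{thm:QuipuToAn}, which also covers the case $m_i=0$---the derived equivalence class.

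I expect the last paragraph to be the main obstacle: fixing the end conventions of the quipu--Nakayama dictionary together with the various $+1$, $+2$, $+3$ terms, and then verifying simultaneously that the three moves preserve the underlying quipu and that their composites are exactly the eight formulas, while keeping track of the degenerate possibilities---$r=0$, coincidences among the parameters, cords of length $0$, and pairs of relations overlapping in a single arrow, which is the $k_i=0$ case.
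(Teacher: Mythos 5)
Your proposal is correct and follows essentially the same route as the paper: translate the algebra to a quipu via \Cref{thm:AnToQuipu}, use the three re-expressions of the same quipu from \Cref{obs:QuipuOrientations} (generating the eight parameter tuples), apply \Cref{thm:QuipuToAn} to each, and handle length-$2$ relations via \Cref{corollary:2Rels}. The paper's own proof is just a three-line citation of these results, so your more detailed verification of the parameter bookkeeping is a faithful expansion of the intended argument.
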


\begin{example}
The path algebra of the quipu $P_{(1,2,0,1)}^{(2,1,3)}$ with any orientation is derived equivalent to (among others) the following Nakayama algebras.
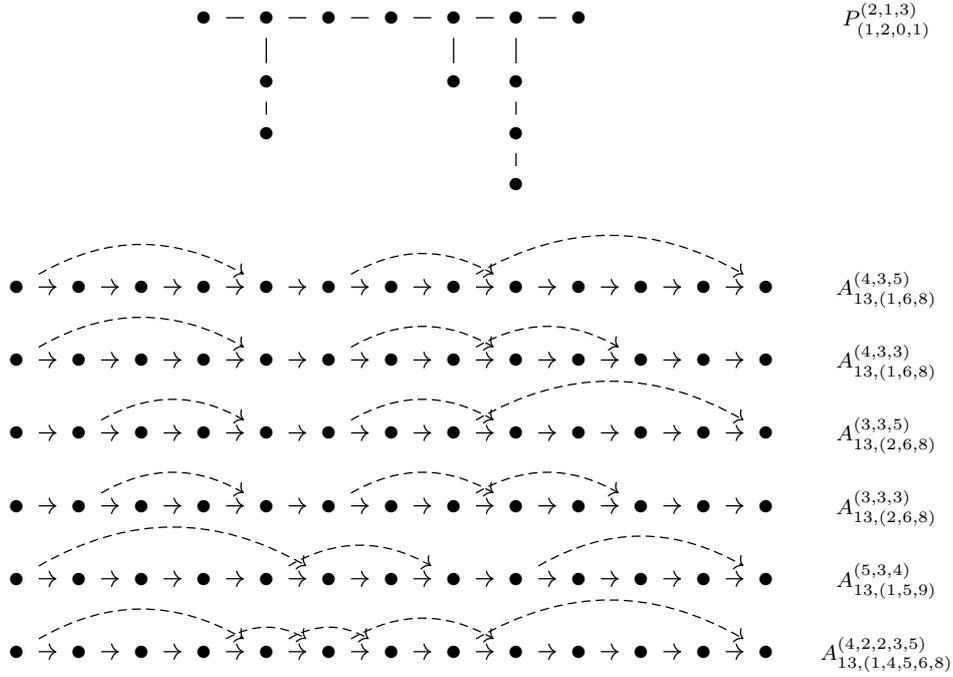
\begin{figure}[H]
\begin{tikzcd}[column sep = tiny, row sep = tiny]
& & & \bullet \ar[r, no head] & \bullet \ar[r, no head]  \ar[d, no head] & \bullet \ar[r, no head] & \bullet \ar[r, no head] & \bullet \ar[r, no head] \ar[d, no head] & \bullet \ar[r, no head] \ar[d, no head] & \bullet & & & & {\scriptstyle P_{(1,2,0,1)}^{(2,1,3)}}\\
& & & & \bullet \ar[d, no head]  & & & \bullet & \bullet \ar[d, no head] & & & & & \\
& & & & \bullet & & & & \bullet \ar[d, no head] & & & & & \\
& & & & & & & & \bullet & & & & & \\
& & & & & & & & & & & & & \\
& & & & & & & & & & & & & \\
& & & & & & & & & & & & & \\
\bullet \arrow[r] \arrow[rrrr, dashed, bend left] & \bullet \arrow[r] & \bullet \arrow[r] & \bullet \arrow[r] & \bullet \arrow[r]  & \bullet \arrow[r] \arrow[rrr, dashed, bend left] & \bullet \arrow[r] & \bullet \arrow[r] \arrow[rrrrr, dashed, bend left] & \bullet \arrow[r] & \bullet \arrow[r] & \bullet \arrow[r] & \bullet \arrow[r] & \bullet & {\scriptstyle A_{13,(1,6,8)}^{(4,3,5)}}\\
\bullet \arrow[r] \arrow[rrrr, dashed, bend left] & \bullet \arrow[r] & \bullet \arrow[r] & \bullet \arrow[r] & \bullet \arrow[r]  & \bullet \arrow[r] \arrow[rrr, dashed, bend left] & \bullet \arrow[r] & \bullet \arrow[r] \arrow[rrr, dashed, bend left] & \bullet \arrow[r] & \bullet \arrow[r] & \bullet \arrow[r] & \bullet \arrow[r] & \bullet & {\scriptstyle A_{13,(1,6,8)}^{(4,3,3)}}\\
\bullet \arrow[r] & \bullet \arrow[r] \arrow[rrr, dashed, bend left] & \bullet \arrow[r] & \bullet \arrow[r] & \bullet \arrow[r]  & \bullet \arrow[r] \arrow[rrr, dashed, bend left] & \bullet \arrow[r] & \bullet \arrow[r] \arrow[rrrrr, dashed, bend left] & \bullet \arrow[r] & \bullet \arrow[r] & \bullet \arrow[r] & \bullet \arrow[r] & \bullet & {\scriptstyle A_{13,(2,6,8)}^{(3,3,5)}}\\
\bullet \arrow[r] & \bullet \arrow[r]  \arrow[rrr, dashed, bend left] & \bullet \arrow[r] & \bullet \arrow[r] & \bullet \arrow[r]  & \bullet \arrow[r] \arrow[rrr, dashed, bend left] & \bullet \arrow[r] & \bullet \arrow[r] \arrow[rrr, dashed, bend left] & \bullet \arrow[r] & \bullet \arrow[r] & \bullet \arrow[r] & \bullet \arrow[r] & \bullet & {\scriptstyle A_{13,(2,6,8)}^{(3,3,3)}}\\
\bullet \arrow[r] \arrow[rrrrr, dashed, bend left] & \bullet \arrow[r] & \bullet \arrow[r] & \bullet \arrow[r] & \bullet \arrow[r] \arrow[rrr, bend left, dashed] & \bullet \arrow[r] & \bullet \arrow[r] & \bullet \arrow[r] & \bullet \arrow[r] \arrow[rrrr, dashed, bend left]& \bullet \arrow[r] & \bullet \arrow[r] & \bullet \arrow[r] & \bullet & {\scriptstyle A_{13,(1,5,9)}^{(5,3,4)}}\\
\bullet \arrow[r] \arrow[rrrr, dashed, bend left] & \bullet \arrow[r] & \bullet \arrow[r] & \bullet \arrow[r] \arrow[rr, bend left, dashed] & \bullet \arrow[r] \arrow[rr, bend left, dashed]  & \bullet \arrow[r] \arrow[rrr, dashed, bend left] & \bullet \arrow[r] & \bullet \arrow[r] \arrow[rrrrr, dashed, bend left] & \bullet \arrow[r] & \bullet \arrow[r] & \bullet \arrow[r] & \bullet \arrow[r] & \bullet & {\scriptstyle A_{13,(1,4,5,6,8)}^{(4,2,2,3,5)}}
\end{tikzcd}
\caption{These Nakayama algebras are all derived equivalent to the path algebra of the quipu $P_{(1,2,0,1)}^{(2,1,3)}$ with any orientation.}
\end{figure}
\end{example}

\section{Quipu quivers}\label{section:quipus}

An (open) quipu is a tree of maximal degree $3$, such that all vertices of degree $3$ lie on a path. 
This path is called the \emph{main string} of the quipu, while the remaining paths going out of each of the vertices of degree $3$ are called \emph{cords}. Both of these terms are taken from \cite{GKS2016}. 

In the notation of \cite{LAN2015}, a quipu can be written as $P_{(k_0,k_1,\ldots, k_r,k_{r+1})}^{(m_0,m_1,\ldots, m_r)}$ where $r\geq 0$ and all $k_i,m_i\geq 0$.
We denote by $m_i$ the length of cord $i$, while $k_i$ denotes the number of vertices along the main string between the beginning of cord $i$ and cord $i+1$.

Note that quipus are not uniquely determined by this notation, and that we do allow trivial cords of length $m_i=0$.

\begin{figure}[H]
\begin{tikzcd}[column sep = small, row sep = small]
\bullet \arrow[r, no head, dotted, "k_0"] & \bullet \arrow[r, no head] & \bullet \arrow[r, no head] \arrow[d, no head] & \bullet \arrow[r, no head, dotted, "k_1"] & \bullet \arrow[r, no head] & \bullet \arrow[r, no head] \arrow[d, no head] & \bullet \arrow[r, no head, dotted] & \bullet \arrow[r, no head] & \bullet \arrow[r, no head] \arrow[d, no head] & \bullet \arrow[r, no head, dotted, "k_r"] & \bullet \arrow[r, no head] & \bullet \arrow[r, no head] \arrow[d, no head] & \bullet \arrow[r, no head, dotted, "k_{r+1}"] & \bullet \\
                                   &                            & \bullet \arrow[d, no head, dotted, "m_0"]            &                                    &                            & \bullet \arrow[d, no head, dotted, "m_1"]            &                                    &                            & \bullet \arrow[d, no head, dotted, "m_{r-1}"]            &                                    &                            & \bullet \arrow[d, no head, dotted, "m_r"]            &                                    &         \\
                                   &                            & \bullet                                       &                                    &                            & \bullet                                       &                                    &                            & \bullet                                       &                                    &                            & \bullet                                       &                                    &        
\end{tikzcd}
\caption{The general form of a quipu $P_{(k_0,k_1,\ldots, k_r,k_{r+1})}^{(m_0,m_1,\ldots, m_r)}$ }
\label{fig:quipu}
\end{figure}
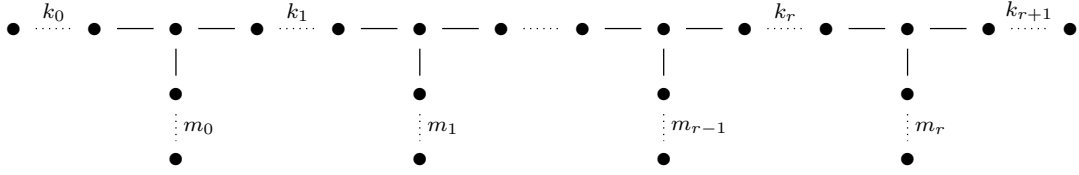

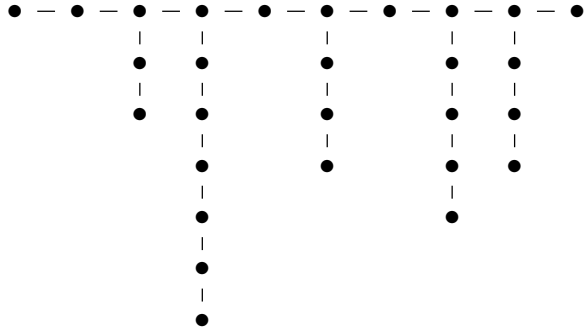
\begin{figure}[H]
\begin{tikzcd}[remember picture, column sep = tiny, row sep = tiny]
\bullet \ar[r, no head] &\bullet \ar[r, no head] & \bullet \ar[r, no head] \ar[d, no head] & \bullet \ar[r, no head] \ar[d, no head] & \bullet \ar[r, no head] & \bullet \ar[r, no head] \ar[d, no head] & \bullet \ar[r, no head]& \bullet \ar[r, no head] \ar[d, no head]& \bullet \ar[r, no head] \ar[d, no head]& \bullet\\
& & \bullet \ar[d, no head] & \bullet \ar[d, no head] & & \bullet \ar[d, no head] & & \bullet \ar[d, no head] & \bullet \ar[d, no head] &\\
& & \bullet & \bullet \ar[d, no head] & & \bullet \ar[d, no head] & & \bullet \ar[d, no head] & \bullet \ar[d, no head] &\\
& & & \bullet \ar[d, no head] & & \bullet & & \bullet \ar[d, no head] & \bullet &\\
& & & \bullet \ar[d, no head] & & & & \bullet  & &\\
& & & \bullet \ar[d, no head] & & & & & &\\
& & & \bullet & & & & & &
\end{tikzcd}\\
\caption{Example of a quipu, this is $P_{(2,0,1,1,0,1)}^{(2,6,3,4,3)}$.}
\label{ex:quipu}
\end{figure}

Because we want to work with quivers, we also need to define an oriented version of a quipu. For a quipu $P_{(k_0,k_1,\ldots, k_r,k_{r+1})}^{(m_0,m_1,\ldots m_r)}$ the \emph{quipu quiver} $D_{(k_0,k_1,\ldots, k_r,k_{r+1})}^{(m_0,m_1,\ldots, m_r)}$ is a directed version of the same graph, where the main string is linearly oriented from left to right and each cord is linearly oriented away from the main string. We will call the path algebra of a quipu quiver a quipu algebra.

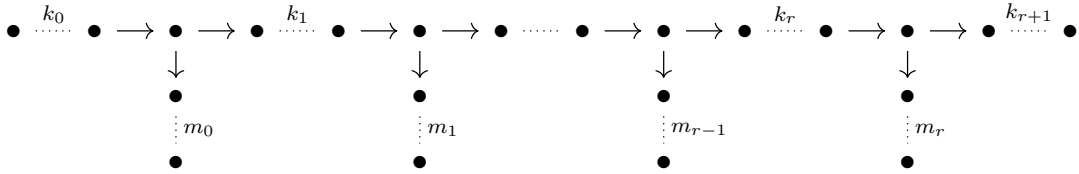
\begin{figure}[H]
\begin{tikzcd}[column sep = small, row sep = small]
\bullet \arrow[r, no head, dotted, "k_0"] & \bullet \arrow[r] & \bullet \arrow[r] \arrow[d] & \bullet \arrow[r, no head, dotted, "k_1"] & \bullet \arrow[r] & \bullet \arrow[r] \arrow[d] & \bullet \arrow[r, no head, dotted] & \bullet \arrow[r] & \bullet \arrow[r] \arrow[d] & \bullet \arrow[r, no head, dotted, "k_r"] & \bullet \arrow[r] & \bullet \arrow[r] \arrow[d] & \bullet \arrow[r, no head, dotted, "k_{r+1}"] & \bullet \\
                                   &                            & \bullet \arrow[d, no head, dotted, "m_0"]            &                                    &                            & \bullet \arrow[d, no head, dotted, "m_1"]            &                                    &                            & \bullet \arrow[d, no head, dotted, "m_{r-1}"]            &                                    &                            & \bullet \arrow[d, no head, dotted, "m_r"]            &                                    &         \\
                                   &                            & \bullet                                       &                                    &                            & \bullet                                       &                                    &                            & \bullet                                       &                                    &                            & \bullet                                       &                                    &        
\end{tikzcd}
\caption{The general form of a quipu quiver $D_{(k_0,k_1,\ldots, k_r,k_{r+1})}^{(m_0,m_1,\ldots, m_r)}$ }
\end{figure}

By looking at \cref{fig:quipu} we can immediately make the following two observations: 

\begin{observation}
The number of vertices in the quipu $P_{(k_0,k_1,\ldots, k_r,k_{r+1})}^{(m_0,m_1,\ldots, m_r)}$ is equal to $r+1+\sum\limits_{i=0}^{r+1}k_i +  \sum\limits_{i=0}^r m_i$.
\end{observation}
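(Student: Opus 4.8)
The plan is a direct vertex count based on the structure displayed in \cref{fig:quipu}. I would partition the vertex set of $P_{(k_0,\ldots,k_{r+1})}^{(m_0,\ldots,m_r)}$ into three pairwise disjoint pieces: the vertices on the main string at which a cord is attached, the remaining vertices of the main string, and the vertices lying strictly on the cords (that is, excluding each cord's attachment point). Since these three sets visibly exhaust all vertices of the quipu, the total is the sum of their sizes.

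First, there is exactly one attachment point for each of the $r+1$ cords (indexed $0$ through $r$), and since the main string is a path these points are pairwise distinct even when some $k_i=0$; this contributes $r+1$ vertices. Next, by the defining convention $k_i$ counts precisely the main-string vertices lying strictly between consecutive attachment points, with $k_0$ counting those before the first attachment point and $k_{r+1}$ those after the last; hence the non-attachment vertices of the main string number $\sum_{i=0}^{r+1}k_i$. Finally, cord $i$ contributes $m_i$ further vertices beyond its attachment point, by definition of its length, and distinct cords share no vertices, so the cords together contribute $\sum_{i=0}^{r}m_i$ vertices. Adding the three counts gives $r+1+\sum_{i=0}^{r+1}k_i+\sum_{i=0}^{r}m_i$, as claimed.

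There is essentially no obstacle here; the only point deserving a word of care is the degenerate case $m_i=0$, and likewise $k_0=0$ or $k_{r+1}=0$, in which the ``attachment point'' of cord $i$ fails to have degree $3$. This does not affect the argument: the notation records $r+1$ cord positions regardless of whether each genuinely yields a degree-$3$ vertex, and the three-way partition above remains valid verbatim, so I would simply remark on this explicitly rather than treat it as a separate case.
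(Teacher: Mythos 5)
Your count is correct and is exactly the argument the paper leaves implicit: the paper simply reads the formula off \cref{fig:quipu}, and your three-way partition into the $r+1$ attachment vertices, the $\sum_{i=0}^{r+1}k_i$ remaining main-string vertices, and the $\sum_{i=0}^{r}m_i$ cord vertices just makes that reading explicit. The remark about degenerate cases ($m_i=0$ or $k_i=0$) is a sensible touch but not needed beyond what you say.
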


\begin{observation}\label{obs:QuipuOrientations}
There exist graph isomorphisms
\begin{align*}
P_{(k_0,k_1,\ldots, k_{r-1},k_r,k_{r+1})}^{(m_0,m_1,\ldots ,m_{r-1},m_r)} &\simeq P_{(m_0,k_1,\ldots, k_{r-1}, k_r, k_{r+1})}^{(k_0,m_1,\ldots, m_{r-1}, m_r)}\\
&\simeq P_{(k_0,k_1,\ldots, k_{r-1}, k_r,m_{r})}^{(m_0,m_1,\ldots, m_{r-1}, k_{r+1})}\\
&\simeq P_{(k_{r+1},k_r,k_{r-1}\ldots, k_1,k_{0})}^{(m_r,m_{r-1},\ldots, m_1, m_0)}
\end{align*}
\end{observation}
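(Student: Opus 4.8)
The plan is to formalise the ``inspection of \cref{fig:quipu}'' that the statement alludes to: I would describe the underlying graph of an arbitrary quipu explicitly as an abstract graph and then, for each of the three claimed isomorphisms, exhibit an explicit adjacency-preserving bijection of vertex sets. All three are instances of one principle — in the underlying graph the distinction between ``a piece of the main string'' and ``a cord'' is invisible, since each is merely a pendant path, and the notation $P_{(\ldots)}^{(\ldots)}$ only records a choice of such a distinction.

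Concretely, I would first fix a description of $G = P_{(k_0,\ldots,k_{r+1})}^{(m_0,\ldots,m_r)}$, following \cref{fig:quipu}: a main-string path carrying distinguished attachment vertices $c_0,\ldots,c_r$ in this left-to-right order, with a pendant path $L$ on $k_0$ vertices glued to $c_0$ on the left, a pendant path $R$ on $k_{r+1}$ vertices glued to $c_r$ on the right, exactly $k_i$ interior main-string vertices strictly between $c_{i-1}$ and $c_i$ for $1\le i\le r$, and a pendant path (``cord'') $C_i$ on $m_i$ vertices glued to each $c_i$. The only structural feature I will use is that $L$ and $C_0$ are both pendant paths meeting the rest of $G$ only at $c_0$, and likewise $R$ and $C_r$ only at $c_r$.

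For the first isomorphism, put $G' = P_{(m_0,k_1,\ldots,k_{r+1})}^{(k_0,m_1,\ldots,m_r)}$ and define $\varphi\colon G\to G'$ to be the identity on all vertices outside $L\cup C_0$, to carry the pendant path $L$ (on $k_0$ vertices) by the evident path isomorphism onto the cord of $G'$ at $c_0$ (also on $k_0$ vertices), and to carry $C_0$ (on $m_0$ vertices) similarly onto the left pendant path of $G'$ (on $m_0$ vertices). Every edge of $G$ lies inside $L$, lies inside $C_0$, joins $c_0$ to $L$, joins $c_0$ to $C_0$, or avoids $L\cup C_0$; the map $\varphi$ matches each with the corresponding edge of $G'$, so it is a graph isomorphism, and one reads off that the attachment vertices of $G'$ are again $c_0,\ldots,c_r$ in order with the stated parameters. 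The second isomorphism is verbatim the same with $(R,C_r,c_r)$ in place of $(L,C_0,c_0)$. The third isomorphism is the reflection of \cref{fig:quipu} about a vertical axis: it sends $c_i\mapsto c_{r-i}$, sends the main-string gap of length $k_i$ to the one of length $k_{r+1-i}$, and sends cord $C_i$ to cord $C_{r-i}$, so reading off the parameters of the mirrored labelled graph yields exactly $P_{(k_{r+1},\ldots,k_0)}^{(m_r,\ldots,m_0)}$.

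The one point needing care — and the natural candidate for the main (if minor) obstacle — is the behaviour at degenerate parameters: when $m_0=0$ there is no genuine cord at $c_0$, and when $k_0=0$ the vertex $c_0$ is the left endpoint of the main string, so in either case $c_0$ may fail to have degree $3$. I would observe that the bijections above still make literal sense in these cases (one of the two interchanged pendant paths is then empty, and a pendant path glued at a vertex of degree $\le 2$ simply extends the main string), so the three isomorphisms hold with no restriction on the parameters; the case $r=0$, where the first two moves act at opposite ends of the single cord, is likewise immediately compatible with the reflection. All of this is bookkeeping of which vertex plays which role rather than genuine case analysis, so it can be handled in a sentence or two.
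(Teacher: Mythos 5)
Your proposal is correct and matches the paper's (informal) justification exactly: the paper treats this as an observation read off from \cref{fig:quipu}, noting only that the first (resp.\ last) cord and the first (resp.\ last) segment of the main string are interchangeable pendant paths at the same attachment vertex, and that reflection reverses all parameters. Your explicit bijections and the remark on degenerate parameters simply formalise that same argument.
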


The second observation essentially says that a quipu is not uniquely defined in this notation. We can view the first cord as a part of the main string, making the first part of the main string a cord instead, and the same is true for the last cord and last part of the main string. Also reversing the order of the main string and the cords clearly doesn't change quipu. 

By combining these operations we get a total of $8$ expressions for the same quipu, some of which may coincide depending on the shape of the ends of the quipu. Note that even if different expressions define the same quipu, their corresponding quipu quivers will in general not be isomorphic.

\begin{figure}[H]\label{ex:nonuniqueQuipu}
\begin{tikzcd}[remember picture, column sep = small, row sep = small]
\bullet \ar[r] & \bullet \ar[r] & \bullet \ar[r] \ar[d] & \bullet &  \bullet \ar[r] & \bullet \ar[r] \ar[d] & \bullet & \bullet \ar[r] & \bullet \ar[r] \ar[d] & \bullet \ar[r] & \bullet \\
& & \bullet & & & \bullet \ar[d] & & & \bullet & & \\
& & & & & \bullet & & & & & 
\end{tikzcd}\\
\caption{Three different quipu quivers which are orientations of the same quipu: $D_{(2,1)}^{(1)}$, $D_{(1,1)}^{(2)}$ and $D_{(1,2)}^{(1)}$.}
\end{figure}
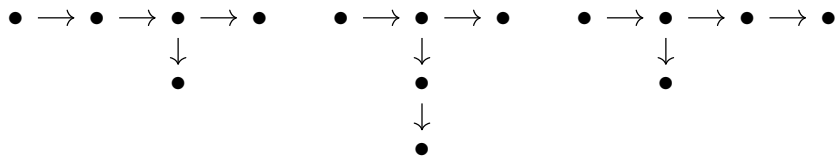

\section{Cord/relation-swap}\label{section:CR-swap}
In this section we introduce the notion of a quipu quiver with almost separate relations, and we develop a technique for going between such quivers in a way that preserves the derived equivalence class of their path algebras. Later we will use this to find derived equivalences between quipu algebras with no relations and Nakayama algebras with almost separate relations.

\begin{definition}[Quipu quiver with almost separate relations]
We say that a quipu quiver has \emph{almost separate relations} if it comes with a set of zero relations satisfying the following conditions. 
\begin{itemize}
\item The relations must all be along the main string
\item No cord may begin at an internal vertex of a relation
\item No pair of relations may have an overlap of more than one arrow
\end{itemize}
\end{definition}

We will now show a technique for generating derived equivalent quipus with almost separate relations. We do this by repeatedly applying the quiver mutation defined in \cite{Fosse2022}, which modifies quivers with relations in a way that preserves the derived equivalence class of their path algebras. We will not show the mutation procedure in detail here, see the paper for more information.

\begin{algorithm}[Cord/relation-swap]\label{algorithm:CRswap}
Let $Q$ be a quipu quiver with almost separate relations, and let $r$ be a relation in $Q$ with length $l\geq 2$, start vertex $s(r)$ and end vertex $t(r)$. Let $m\geq 0$ denote the length of the (possibly trivial) cord starting in $s(r)$. We construct a new quipu with almost separate relations $Q'$ from $Q$ as follows:
\begin{enumerate}[label=\arabic*.]
\item Remove the relation $r$ and the cord of length $m$ starting in $s(r)$.
\item Replace the path $s(r)\rightarrow \ldots \rightarrow t(r)$ of length $l$ by a path of length $m+2$. If there is a relation starting in the vertex before $t(r)$, keep it.
\item If there is an arrow ending in $s(r)$, add a relation of length $m+2$ from the start vertex of this arrow to the vertex one step before $t(r)$ along the main string.
\item Add a cord of length $l-2$ going out of the vertex before $t(r)$.
\end{enumerate}
\end{algorithm}

\begin{proposition}
The quipu quiver $Q'$ with relations constructed in \cref{algorithm:CRswap} has a path algebra which is derived equivalent to that of the quipu quiver Q with relations.
\end{proposition}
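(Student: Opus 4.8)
The plan is to realize the cord/relation-swap of \Cref{algorithm:CRswap} as a composition of the elementary quiver mutations from \cite{Fosse2022}, each of which is known to preserve the derived equivalence class of the path algebra. Since the mutation in \cite{Fosse2022} is a local operation at a vertex (or along a short path) of a quiver with relations, the overall strategy is: pick a suitable vertex near $t(r)$, show the mutation is admissible there (i.e.\ the hypotheses of the mutation theorem are met, which is where the ``almost separate'' condition and the ``no cord begins at an internal vertex of a relation'' condition get used), compute the resulting quiver with relations, and check that after finitely many such steps one arrives exactly at $Q'$ as described in steps 1--4 of the algorithm.

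Concretely, I would first isolate the local picture around the relation $r$: the path $s(r)\to\cdots\to t(r)$ of length $l$ carrying the zero relation, the (possibly trivial) cord of length $m$ at $s(r)$, the arrow (if any) entering $s(r)$, and any relation ending at the vertex just before $t(r)$. Everything else in $Q$ is untouched, so it suffices to track this neighbourhood. I would then apply the mutation at the vertex $t(r)$ (or, depending on the exact convention in \cite{Fosse2022}, at the penultimate vertex of $r$). The effect of a single mutation there should be to ``flip'' one arrow of the relation into a cord-arrow and correspondingly lengthen/shorten the relevant paths; iterating the mutation walks the flip down the relation. The bookkeeping to verify is: (a) the length-$l$ path becomes a length-$(m+2)$ path, (b) the old cord of length $m$ at $s(r)$ disappears, (c) a new relation of length $m+2$ appears from the source of the incoming arrow to the vertex before $t(r)$ (only when that incoming arrow exists), and (d) a new cord of length $l-2$ sprouts at the vertex before $t(r)$. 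One also has to confirm that the output is again a \emph{quipu} quiver with \emph{almost separate relations}: maximal degree $3$, all degree-$3$ vertices on a single path, relations only on the main string, no cord at an internal vertex of a relation, and pairwise overlaps of at most one arrow. The almost-separateness of the input is exactly what guarantees no mutation step is obstructed by an overlapping relation, and it is what makes the new relation in step 3 almost separate from its neighbours.

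I expect the main obstacle to be the intermediate steps: after the first mutation but before the last, the quiver need not be a quipu with almost separate relations in the strict sense, so I cannot simply invoke the quipu-level hypotheses at each stage. I would handle this either by formulating a slightly more permissive class of quivers-with-relations that is closed under the relevant single mutations and contains both $Q$ and $Q'$, and checking the mutation theorem of \cite{Fosse2022} applies throughout that class; or, alternatively, by choosing the order of mutations so that each intermediate quiver \emph{is} still of the required form (e.g.\ peeling off arrows from the $t(r)$ end one at a time, so that at every stage there is still a genuine relation of decreasing length and a genuine cord of increasing length). The derived equivalence then follows by composing the equivalences from the individual mutations, invoking the proposition-level statement of \cite{Fosse2022} once per step; the final identification with the quiver produced by steps 1--4 of \Cref{algorithm:CRswap} is then a direct comparison of the two local pictures.
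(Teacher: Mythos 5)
Your overall strategy is the same as the paper's: realize the CR-swap as a composition of the elementary mutations of \cite{Fosse2022}, work locally in a neighbourhood of the relation $r$, and compose the resulting derived equivalences. However, the mutation sequence you sketch is incomplete in a way that matters. Walking the mutation down the relation from the $t(r)$ end accounts for only half of the transformation: it detaches the internal vertices of $r$ and assembles them into the new cord of length $l-2$ at the vertex before $t(r)$, but it does not absorb the old cord of length $m$ at $s(r)$ into the main string, and it cannot by itself produce the new relation of length $m+2$ of step 3 of \cref{algorithm:CRswap}. The paper's proof mutates at the vertices ${\scriptstyle l-2},\ {\scriptstyle l-3},\ \ldots,\ {\scriptstyle 1},\ {\scriptstyle 0}$ along the relation (starting two vertices before $t(r)$, not at $t(r)$ or the penultimate vertex) and then continues with a second phase at ${\scriptstyle l+1},\ {\scriptstyle l+2},\ \ldots,\ {\scriptstyle l+m}$ along the cord at $s(r)$; it is this second phase that straightens the cord into the main string and, precisely when there is an arrow into $s(r)$, creates the relation of length $m+2$. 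Without specifying and carrying out this cord phase, items (a), (b) and (c) of your bookkeeping list cannot be verified, and the sequence you describe terminates at an intermediate quiver rather than at $Q'$.

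On the other hand, the obstacle you anticipate about intermediate stages is less serious than you fear, and the paper resolves it by neither of your two proposed routes. The mutation of \cite{Fosse2022} is defined for quivers with relations in general, not only for quipu quivers with almost separate relations, so there is no need for the intermediate quivers to remain in that class; the paper simply displays each intermediate quiver explicitly (they contain commutativity relations and are certainly not quipus) and checks at each step that the mutation rules produce the next one. The \emph{almost separate} hypothesis is used only to guarantee that the local picture around $r$ is as depicted --- no cord at an internal vertex of $r$, and at most a one-arrow overlap with the neighbouring relations --- so that the computation is genuinely local and the optional dotted relations on either side are carried through unchanged.
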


This move can be viewed as the cord and relation swapping order and swapping lengths ($\pm\ 2$), hence we refer to it as \emph{cord/relation-swap} (CR-swap for short).

Below is a visual representation of the CR-swap in action.\\

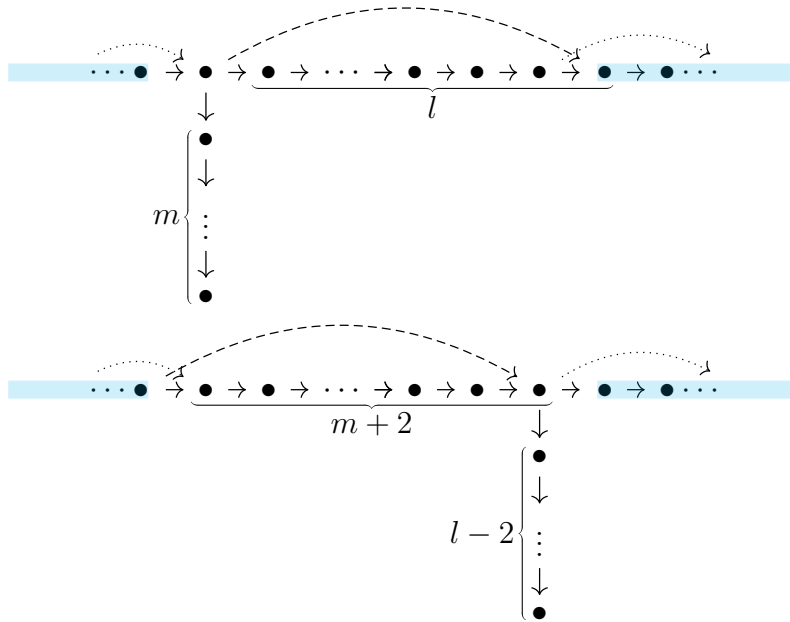
\begin{figure}[H]
\begin{tikzcd}[remember picture, column sep = tiny, row sep = small]
\tikz[baseline,remember picture]\node[anchor=base,inner sep=0pt] (node1) {}; \hspace{1cm} \cdots \ar[rr, bend left, dotted] & [-15] \bullet \tikz[baseline,remember picture]\node[anchor=base,inner sep=0pt] (node2) {}; \ar[r] & \bullet \ar[r] \ar[d] \ar[rrrrrr, dashed, bend left] & \bullet \ar[r] & \cdots \ar[r]  \ar[r] & \bullet \ar[r] &\bullet \ar[r] & \bullet \ar[r] \ar[rrr, bend left, dotted] & \tikz[baseline,remember picture]\node[anchor=base,inner sep=0pt] (node3) {}; \bullet \ar[r] & \bullet & [-15] \cdots \hspace{1cm} \tikz[baseline,remember picture]\node[anchor=base,inner sep=0pt] (node4) {};\\
& & \bullet \ar[d] & & & & & & &\\
& & \vdots \ar[d] & & & & & & &\\
& & \bullet & & & & & & & \\
& & & & & & & & & \\
\tikz[baseline,remember picture]\node[anchor=base,inner sep=0pt] (node5) {}; \hspace{1cm} \cdots \ar[rr, bend left, dotted] & [-15] \bullet \tikz[baseline,remember picture]\node[anchor=base,inner sep=0pt] (node6) {};  \ar[r] \ar[rrrrrr, dashed, bend left] & \bullet \ar[r] & \bullet \ar[r] & \cdots \ar[r]  \ar[r] & \bullet \ar[r] &\bullet \ar[r] & \bullet \ar[r] \ar[d] \ar[rrr, dotted, bend left] & \tikz[baseline,remember picture]\node[anchor=base,inner sep=0pt] (node7) {};  \bullet \ar[r] & \bullet & [-15] \cdots \hspace{1cm} \tikz[baseline,remember picture]\node[anchor=base,inner sep=0pt] (node8) {}; \\
& & & & & & & \bullet \ar[d] & &\\
& & & & & & & \vdots \ar[d] & &\\
& & & & & & & \bullet & &
\end{tikzcd}

\begin{tikzpicture}[overlay, remember picture]
   \draw [decorate, decoration = {calligraphic brace}] (-2.8,4.4) --  (-2.8,6.7) node[midway,left] {$m$};
   \draw [decorate, decoration = {calligraphic brace}] (2.75,7.3) --  (-2,7.3) node[midway,below] {$l$};
   \draw[cyan,-,double=cyan, double distance = 16\pgflinewidth, opacity=0.1] (node1) -- (node2);
   \draw[cyan,-,double=cyan, double distance = 16\pgflinewidth, opacity=0.1] (node3) -- (node4);
   \draw [decorate, decoration = {calligraphic brace}] (1.6,0.21) --  (1.6,2.5) node[midway,left] {$l-2$};
   \draw [decorate, decoration = {calligraphic brace}] (1.95,3.1) --  (-2.8,3.1) node[midway,below] {$m+2$};
   \draw[cyan,-,double=cyan, double distance = 16\pgflinewidth, opacity=0.1] (node5) -- (node6);
   \draw[cyan,-,double=cyan, double distance = 16\pgflinewidth, opacity=0.1] (node7) -- (node8);
\end{tikzpicture}\\
\caption{Relevant part of the quipu quiver before and after the CR-swap. The dotted arrows are optional relations and the shaded parts of the quiver remain unchanged.}
\end{figure}

\begin{proof}[Proof of proposition]
Starting from a quipu quiver with almost separate relations, we will mutate through a sequence of other quivers, until we arrive at another quipu quiver with almost separate relations.
We will explicitly state which vertices to mutate at in order to produce the desired change in the quiver. We begin by numbering the vertices like this
\begin{equation*}
\hspace{-1cm}
\begin{tikzcd}[remember picture, column sep = small, row sep = small]
\hspace{1cm} \cdots \ar[rr, bend left, dotted] & [-20] {\scriptstyle -1} \ar[r] & {\scriptstyle 0} \ar[r] \ar[d] \ar[rrrrrr, dashed, bend left] & {\scriptstyle 1} \ar[r] & \cdots \ar[r]  \ar[r] & {\scriptstyle l-3} \ar[r] & {\scriptstyle l-2} \ar[r] & {\scriptstyle l-1} \ar[r] \ar[rrr, bend left, dotted] & {\scriptstyle l} \ar[r] & {\scriptstyle l+m+1} & [-20] \cdots \hspace{1cm} \\
& & {\scriptstyle l+1} \ar[d] & & & & & & &\\
& & \vdots \ar[d] & & & & & & &\\
& & {\scriptstyle l+m} & & & & & & &
\end{tikzcd}
\end{equation*}
The dotted arrows represent optional zero relations to/from some vertices in other parts of the quiver. We include these relations to show that they remain unchanged after the CR-swap. If there are no such relations, the CR-swap still works the same, which is easily seen by repeating the following proof without the dotted relations.

With this numbering, we will mutate at the following sequence of vertices:
\begin{equation*}
{\scriptstyle l-2},\ {\scriptstyle l-3},\ \ldots,\ {\scriptstyle 1},\ {\scriptstyle 0},\ {\scriptstyle l+1},\ {\scriptstyle l+2},\ \ldots,\ {\scriptstyle l+m}
\end{equation*}
In other words, we begin by mutating on the second vertex from the end of the relation. Then we move one vertex back each time, until we hit the start of the relation, and then we mutate along the cord. 

Now, let us mutate the quiver at vertex ${\scriptstyle l-2}$. We do this by applying the mutation rules, which in this case just means adding an arrow from ${\scriptstyle l-3}\rightarrow {\scriptstyle l-1}$, flipping the arrow ${\scriptstyle l-2}\rightarrow {\scriptstyle l-1}$, and replacing the arrow ${\scriptstyle l-3}\rightarrow {\scriptstyle l-2}$ by a relation. Thus, we end up with the quiver
\begin{equation*}
\begin{tikzcd}[remember picture, column sep = small, row sep = small]
\hspace{1cm} \cdots \ar[rr, bend left, dotted] & [-20] {\scriptstyle -1} \ar[r] & {\scriptstyle 0} \ar[r] \ar[d] \ar[rrrrr, dashed, bend left] & {\scriptstyle 1} \ar[r] & \cdots \ar[r]  \ar[r] & {\scriptstyle (l-3)} \ar[r] \ar[dr, dashed] & {\scriptstyle (l-1)} \ar[r] \ar[d] \ar[rrr, bend left, dotted] & {\scriptstyle l} \ar[r] & {\scriptstyle (l+m+1)} & [-20] \cdots \hspace{1cm} \\
& & {\scriptstyle (l+1)} \ar[d] & & & & {\scriptstyle (l-2)} & & &\\
& & \vdots \ar[d] & & & & & & &\\
& & {\scriptstyle (l+m)} & & & & & & &
\end{tikzcd}
\end{equation*}
Mutation at vertex ${\scriptstyle l-3}$ is now done similarly, except we also turn the relation ${\scriptstyle l-3}\dashrightarrow {\scriptstyle l-2}$ into an arrow, and add a relation which makes the new composition ${\scriptstyle l-1}\rightarrow {\scriptstyle l-3}\rightarrow {\scriptstyle l-2}$ equal to the arrow ${\scriptstyle l-1}\rightarrow {\scriptstyle l-2}$. All together, these changes result in the quiver
\begin{equation*}
\begin{tikzcd}[remember picture, column sep = small, row sep = small]
\hspace{1cm} \cdots \ar[rr, bend left, dotted] & [-20] {\scriptstyle -1} \ar[r] & {\scriptstyle 0} \ar[r] \ar[d] \ar[rrrrr, dashed, bend left] & {\scriptstyle 1} \ar[r] & \cdots \ar[r]  \ar[r] & {\scriptstyle l-4} \ar[r] \ar[dr, dashed] & {\scriptstyle l-1} \ar[r] \ar[d] \ar[rrr, bend left, dotted] & {\scriptstyle l} \ar[r] & {\scriptstyle l+m+1} & [-20] \cdots \hspace{1cm} \\
& & {\scriptstyle l+1} \ar[d] & & & & {\scriptstyle l-3} \ar[d] & & &\\
& & \vdots \ar[d] & & & & {\scriptstyle l-2} & & &\\
& & {\scriptstyle l+m} & & & & & & &
\end{tikzcd}
\end{equation*}
This pattern continues as we mutate along ${\scriptstyle l-4},\ \ldots, {\scriptstyle 1}$. With each mutation the relation on top becomes one arrow shorter, and the path going out of vertex ${\scriptstyle l-1}$ becomes one arrow longer. This is easily seen using the mutation rules. Eventually, after mutating at each vertex from ${\scriptstyle l-2}$ down to and including ${\scriptstyle 1}$, we have this quiver
\begin{equation*}
\begin{tikzcd}[remember picture, column sep = small, row sep = small]
\hspace{1cm} \cdots \ar[rr, bend left, dotted] & [-20] {\scriptstyle -1} \ar[r] & {\scriptstyle 0} \ar[r] \ar[d] \ar[rr, dashed, bend left] \ar[dr, dashed] & {\scriptstyle l-1} \ar[r] \ar[d] \ar[rrr, bend left, dotted] & {\scriptstyle l} \ar[r] & {\scriptstyle l+m+1} & [-20] \cdots \hspace{1cm} \\
& & {\scriptstyle l+1} \ar[d] & {\scriptstyle 1} \ar[d] & & &\\
& & \vdots \ar[d] & \vdots \ar[d] & & &\\
& & {\scriptstyle l+m} & {\scriptstyle l-2} & & &
\end{tikzcd}
\end{equation*}
Mutating at vertex ${\scriptstyle 0}$ will now yield, after some rearranging and simplifying, the following quiver. Note that the dotted relation on the left side gives rise to two new relations, both starting at the same vertex.

\begin{equation*}
\begin{tikzcd}[remember picture, column sep = small, row sep = small]
\hspace{1cm} \cdots \ar[rrr, bend left, dotted] \ar[drr, dotted] & [-20] {} \ar[r] & {\scriptstyle -1} \ar[r] \ar[d] \ar[dr, dashed] & {\scriptstyle l-1} \ar[d] & & {\scriptstyle l} \ar[r] & {\scriptstyle l+m+1} & [-20] \cdots \hspace{1cm} \\
& & {\scriptstyle l+1} \ar[r] \ar[d] \ar[urrr, dashed] \ar[dr, dashed] & {\scriptstyle 0} \ar[d] \ar[urr] \ar[urrrr, dotted, bend right, looseness=0]& & & &\\
& & {\scriptstyle l+2} \ar[d] & {\scriptstyle 1} \ar[d] & & & &\\
& & \vdots \ar[d] & \vdots \ar[d] & & & &\\
& & {\scriptstyle l+m} & {\scriptstyle l-2} & & & &
\end{tikzcd}
\end{equation*}
Mutating at ${\scriptstyle l+1}$ now yields
\begin{equation*}
\begin{tikzcd}[remember picture, column sep = small, row sep = small]
\hspace{1cm} \cdots \ar[rrr, bend left, dotted] \ar[drrr, dotted] & [-20] {} \ar[r] & {\scriptstyle -1} \ar[r] \ar[dr] \ar[drr, dashed] & {\scriptstyle l-1} \ar[r] & {\scriptstyle 0} \ar[d] & & {\scriptstyle l} \ar[r] & {\scriptstyle l+m+1} & [-20] \cdots \hspace{1cm} \\
& & & {\scriptstyle l+2} \ar[r] \ar[d] \ar[urrr, dashed] \ar[dr, dashed] & {\scriptstyle l+1} \ar[d] \ar[urr] \ar[urrrr, dotted, bend right, looseness=0]& & & &\\
& & & {\scriptstyle l+3} \ar[d] & {\scriptstyle 1} \ar[d] & & & &\\
& & & \vdots \ar[d] & \vdots \ar[d] & & & &\\
& & & {\scriptstyle l+m} & {\scriptstyle l-2} & & & &
\end{tikzcd}
\end{equation*}

Notice that the structure of the quiver is mostly unchanged, except ${\scriptstyle l+1}$ has taken the place of ${\scriptstyle 0}$, ${\scriptstyle l+2}$ has taken the place of ${\scriptstyle l+1}$, and ${\scriptstyle 0}$ has been inserted into the upper path of the commutative diagram. So the difference is that the cord going out of the lower path of the commutativity relation has become shorter by one arrow, and the upper path has become longer by one arrow.

This pattern continues as we keep mutating along the cord ${\scriptstyle l+2},\ {\scriptstyle l+3},\ \ldots$, and eventually, after mutating at ${\scriptstyle l+m-1}$, we have the quiver
\begin{equation*}
\begin{tikzcd}[remember picture, column sep = small]
\hspace{1cm} \cdots \ar[rrr, bend left, dotted] \ar[drrrrr, dotted] & [-20] {} \ar[r] & {\scriptstyle -1} \ar[r] \ar[drrr] \ar[drrrrr, dashed] & {\scriptstyle l-1} \ar[r] & {\scriptstyle 0} \ar[r] & {\scriptstyle l+1} \ar[r] & \cdots \ar[r] & {\scriptstyle l+m-2} \ar[d] & & {\scriptstyle l} \ar[r] & {\scriptstyle l+m+1} & [-20] \cdots \hspace{1cm} \\
& & & & & {\scriptstyle l+m} \ar[rr]  \ar[urrrr, dashed] \ar[drr, dashed] & & {\scriptstyle l+m-1} \ar[d] \ar[urr] \ar[urrrr, dotted, bend right, looseness=0] & & &  &\\
& & {}  & & & & & {\scriptstyle 1} \ar[d] & & & &\\
& &   & & & & & \vdots \ar[d] &  & & &\\
& & {} & & & & & {\scriptstyle l-2}  & & & &
\end{tikzcd}
\end{equation*}

Finally, if we now mutate at ${\scriptstyle l+m}$, and do a bit of rearranging, we end up with the following quiver
\begin{equation*}
\begin{tikzcd}[remember picture, column sep = tiny, row sep = small]
\hspace{1cm} \cdots \ar[rr, dotted, bend left] & [-15] {\scriptstyle -1} \ar[r] \ar[rrrrrr, bend left, dashed, looseness=0.75] & {\scriptstyle l-1} \ar[r] & {\scriptstyle 0} \ar[r] & {\scriptstyle l+1} \ar[r] & \cdots \ar[r] & {\scriptstyle l+m-1} \ar[r] & {\scriptstyle l+m} \ar[r] \ar[d] \ar[rrr, bend left, dotted] & {\scriptstyle l} \ar[r] & {\scriptstyle l+m+1} & [-15] \cdots \hspace{1cm} \\
& & & & & & & {\scriptstyle 1} \ar[d] & & & \\
& & & & & & & {\scriptstyle 2} \ar[d] & & & \\
& & & & & & & \vdots \ar[d] & & & \\
& & & & & & & {\scriptstyle l-2} & & &
\end{tikzcd}
\end{equation*}
This is what we wanted to show. The cord and relation have swapped places, and it is easy to see that the length of the cord is $l-2$, and the length of the relation is $m+2$. Notice that relatively to the rest of the quiver, the dotted relations remain unchanged. Also note that if there is no arrow ${\scriptstyle -1}\rightarrow {\scriptstyle 0}$, then the mutation simplifies, and we get no relation after the swap. This completes the proof.
\end{proof}

The CR-swap is a valuable tool for investigating derived equivalence classes of quipu algebras. We now give a corollary for some special cases of using the CR-swap.

\begin{corollary}\label{cor:CRnocord}
Let $Q$ be a quipu quiver with almost separate relations, with a relation of length $l$ and a cord of length $m$ starting at the same vertex.
If we apply the CR-swap to this relation and cord, then the following holds:
\begin{enumerate}
	\item If $l=2$, then the resulting cord obtained by the CR-swap has length $0$. 
	\item If $m=0$, then the resulting relation obtained by the CR-swap has length $2$ 
\end{enumerate}

\end{corollary}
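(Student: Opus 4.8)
The plan is to read both statements directly off the construction in \cref{algorithm:CRswap} (equivalently, off the final quiver produced in the proof of the proposition above), since the length of the new cord and the length of the new relation are given there by explicit formulas in $l$ and $m$: step~4 produces a cord of length $l-2$, while steps~2 and~3 produce a main-string path and (when applicable) a relation of length $m+2$. So the entire content of the corollary is substitution of the two boundary values $l=2$ and $m=0$ into these formulas, together with a check that the resulting object is still a legitimate quipu quiver with almost separate relations.

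For part~(1), suppose $l=2$. By step~4 of \cref{algorithm:CRswap} the new cord out of the vertex immediately before $t(r)$ has length $l-2=0$, i.e.\ no cord is added at all. Here one should observe that a cord of length $0$ is permitted by our conventions (trivial cords are allowed), and that the swap only alters arrows lying along the old path $s(r)\to\cdots\to t(r)$ and the old cord at $s(r)$, so no new vertex of degree $3$ and no relation overlapping another by more than one arrow is created; hence $Q'$ is again a quipu quiver with almost separate relations. The only point requiring a moment of care is that for $l=2$ the path $s(r)\to\cdots\to t(r)$ has a single internal vertex, which is precisely ``the vertex one step before $t(r)$'' referred to in steps~2--4, so the relation of step~3 and the (trivial) cord of step~4 are both attached at this common vertex, consistently with the generic picture.

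For part~(2), suppose $m=0$, so that the cord starting in $s(r)$ is trivial and step~1 removes only the relation $r$. Step~2 replaces the path $s(r)\to\cdots\to t(r)$ by a path of length $m+2=2$, and step~3 adds, when there is an arrow ending in $s(r)$, a relation of length $m+2=2$ from the source of that arrow to the vertex one step before $t(r)$. (If there is no arrow ending in $s(r)$, the proof of the proposition shows that no relation is produced at all, but whenever a relation is produced its length is $2$.) As in part~(1), the output is readily checked to be a quipu quiver with almost separate relations.

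Since both parts amount to plugging $l=2$, respectively $m=0$, into the length formulas built into \cref{algorithm:CRswap} and verifying the mild degeneracies described above, there is no substantive obstacle; the only thing demanding attention is the bookkeeping in those degenerate cases, in particular identifying the coinciding vertices when $l=2$ and confirming that ``length $0$ cord'' and ``length $2$ relation'' are meaningful and compatible with the standing conventions.
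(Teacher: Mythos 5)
Your proposal is correct and matches the paper's (implicit) justification: the corollary is simply read off the CR-swap, which always produces a cord of length $l-2$ and a relation of length $m+2$, so the two cases are the substitutions $l=2$ and $m=0$. The paper gives no separate proof beyond this observation, and your extra bookkeeping about the degenerate cases is consistent with the stated conventions (trivial cords allowed, no relation produced when there is no arrow into $s(r)$).
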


Point $1$ means the resulting quipu quiver after the CR-swap in that case just has a relation of length $m+2$, shifted one vertex back along the main string, with no cord going out of its last vertex. From point $2$, we see that if we swap at a relation which has no cord going out of its first vertex, then the resulting quipu quiver has a relation of length $2$, with a cord of length $l-2$ going out of its last vertex.

By combining these two points, we observe that CR-swapping a lone relation of length $2$ with no cord, will result in a new relation of length $2$ with no cord, shifted one vertex back in the quiver.

\begin{remark}
As presented here, the CR-swap uses right tilting mutation. There is an entirely dual result using left mutation, meaning we can swap cords and relations in both directions.
\end{remark}

\section{Quipus and Nakayama algebras}\label{section:QuipusAndNakayamaAlgebras}

In this section we will show that any linear Nakayama algebra with almost separate relations is derived equivalent to the path algebra of some quipu quiver. Furthermore we use this to develop a method to classify all Nakayama algebras with almost separate relations into derived equivalence classes.
 
The main idea is to find a way to mutate between a quiver with a relation of a certain length at a certian position, and one with a cord of a corresponding length at a corresponding position. By combining this with the CR-swap, we will be able to construct a sequence of quiver mutations between a quipu quiver and the quiver of a Nakayama algebra.

We will show that there exists a sequence of quiver mutations between the quipu quiver $D_{(k_0,k_1,\ldots,k_r,k_{r+1})}^{(m_0,m_1,\ldots,m_r)}$ and the quiver $A_{n}$ with $(r+1)$ relations, where the $i$-th relation has length $(m_i+2)$ and its starting vertex is determined by counting the number of vertices appearing before the beginning of the $i$-th cord in $D_{(k_0,k_1,\ldots,k_r,k_{r+1})}^{(m_0,m_1,\ldots,m_r)}$.
So if the vertices in $A_n$ are labelled from $1$ to $n$, the $0$-th relation will begin at vertex $k_0$ and have length $m_0$, the $1$-st relation will begin at vertex $(k_0+m_0+k_1+1)$ with length $m_1$, and so on. In general relation $i$ will have length $m_i$ and begin at vertex $k_0 + \sum\limits_{j=1}^{i} (m_{j-1} + k_{j} + 1)$ for $1\leq i\leq r$.
See \cref{fig:quipuLineCorrespondence} for an example of this correspondence.

\begin{figure}[H]
\begin{tikzcd}[column sep = tiny, row sep = tiny]
\bullet \arrow[r] & \bullet \arrow[r] \arrow[rrrr, dashed, bend left] & \bullet \arrow[r] & \bullet \arrow[r] & \bullet \arrow[r]  & \bullet \arrow[r] & \bullet \arrow[r] \arrow[rrrrr, dashed, bend left] & \bullet \arrow[r] & \bullet \arrow[r] & \bullet \arrow[r] & \bullet \arrow[r] \arrow[rrr, dashed, bend left] & \bullet \arrow[r] & \bullet \arrow[r] & \bullet\\
& & & & & & \rotatebox[origin=c]{270}{$\leftrightsquigarrow$} & & & & & \\
& & \bullet \arrow[r] & \bullet \arrow[r] & \bullet \arrow[r] \arrow[d] & \bullet \arrow[r] & \bullet \arrow[r] & \bullet \arrow[r] \arrow[d]& \bullet \arrow[r] \arrow[d] & \bullet & & & &\\
& & & & \bullet \arrow[d] & &  & \bullet \arrow[d] & \bullet &  &  & & &\\
& & & & \bullet & &  &  \bullet \arrow[d] & &  &  & & &\\
& & & & & &  & \bullet & & & & & &      
\end{tikzcd}
\caption{There is a derived equivalence between the algebras $A_{14,(2,7,11)}^{(4,5,3)}$ and $kD_{(2,2,0,1)}^{(2,3,1)}$.}
\label{fig:quipuLineCorrespondence}
\end{figure}
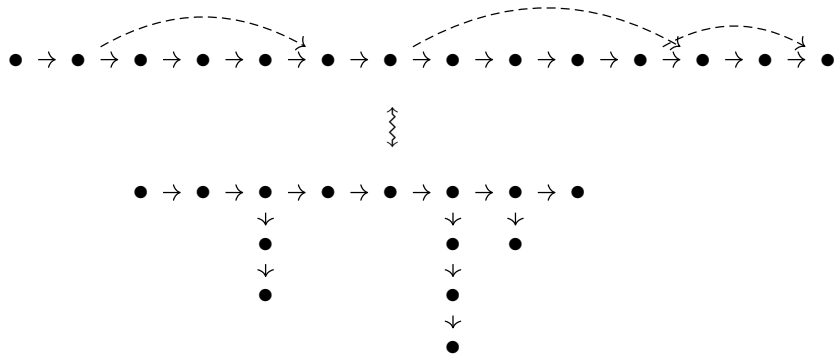

\begin{lemma}\label{lemma:firstRelToCord}
Let $Q$ be a quipu quiver with almost separate relations, and let $s,\ t$ and $l$ denote resp. the source, target and length of the first relation along the main string of $Q$. Let $Q'$ be identical to $Q$, except the following changes:
\begin{itemize}
\item The relation $s\dashrightarrow t$ is removed.
\item The path $s\rightarrow \ldots \rightarrow t$ is replaced by a path $s\rightarrow t'\rightarrow t$.
\item There is a cord of length $l-2$ going out of vertex $t'$.
\end{itemize}
Then $Q$ and $Q'$ have derived equivalent path algebras.
\end{lemma}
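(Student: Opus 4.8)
\emph{Proof idea.} When the relation being CR-swapped (\cref{algorithm:CRswap}) is the first one along the main string, the leftover relation produced by the swap can be pushed all the way off the left end of the main string; this is the plan. The cleanest case is when $s$ is the leftmost vertex of the main string and carries no cord: then $m=0$, and applying the CR-swap to $s\dashrightarrow t$ gives, by \cref{cor:CRnocord}(2), a path $s\to t'\to t$ with a cord of length $l-2$ at $t'$, while step (3) of \cref{algorithm:CRswap} contributes nothing because a quipu quiver has no arrow into the leftmost vertex of its main string. The output is exactly $Q'$, so here one move suffices.

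In general the first CR-swap still creates the cord of length $l-2$ at $t'$ but leaves a \emph{travelling relation} whose source is one vertex to the left of $s$ — of length $2$ if $s$ carried no cord, and otherwise of length $m+2$, in which case one more CR-swap restores it to length $2$ (again by \cref{cor:CRnocord}(2)) while re-creating the cord at $t'$. Everything from $t'$ rightward — that cord and all the remaining relations of $Q$ — is now frozen, and it remains to dispose of the travelling relation. We CR-swap it repeatedly: by the observation following \cref{cor:CRnocord}, when its source is cord-free the relation simply moves one vertex left, and when its source carries a cord of length $p$ the CR-swap absorbs that cord into the main string to produce a relation of length $p+2$, which a further CR-swap returns to a travelling relation of length $2$ while re-depositing a cord of length $p$ essentially where it was. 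Nothing strictly left of the travelling relation is ever disturbed, so after finitely many steps its source is the leftmost vertex; with no incoming arrow, the last CR-swap deletes it and spawns no new relation. The result coincides with $Q'$ except possibly along the relation-free tail to the left of $s$, where it is a reorientation of $Q'$ (this really occurs when the leftmost vertex itself carries a cord); since any two orientations of this relation-free tail are related by reflections, $kQ\simeq kQ'$.

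The real content is the bookkeeping glossed over above: checking that every cord between the leftmost vertex and $s$ is returned to its correct position after an absorb/re-deposit pair of CR-swaps, and pinning down the reorientation that occurs in the boundary case. I expect this to be the main obstacle, but it is mechanical — at every step the underlying graph only trades the $l-2$ internal vertices of the original relation for the $l-2$ vertices of the new cord at $t'$, so a running vertex count makes the identifications routine.
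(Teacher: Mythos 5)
Your proposal is essentially the paper's proof: both march the first relation leftward by iterated CR-swaps, each swap absorbing the cord at the new source and re-depositing the previously absorbed cord at the target, until the relation falls off the left end of the main string. One descriptive slip: the travelling relation only returns to length $2$ when the next source is cord-free, so over a stretch of consecutive cord-carrying vertices it keeps length $m+2$ (with $m$ the cord length at the current source) rather than alternating back to $2$ — this is exactly the paper's bookkeeping, and since the absorb/deposit mechanism is unchanged it does not harm the argument, but the appeal to \cref{cor:CRnocord}(2) at that step is only valid when the new source carries no cord. Your explicit treatment of the boundary case where the leftmost vertex of the main string carries a cord — so that the final quiver is only a reorientation of $Q'$ along the relation-free left tail — addresses a point the paper's own proof passes over silently; to close it you should justify that the required reflections are sink/source mutations at vertices not meeting any relation, hence derived equivalences within the mutation framework of \cite{Fosse2022}.
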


\begin{proof}
The idea behind this proof is to repeatedly apply the CR-swap to the first relation along the main string in $Q$. Each time we do so, what constitutes the first relation is moved one step towards the beginning of the main string, until eventually it is removed entirely. Along the way the cords to the left of the first relation will be turned into relations and then back into cords, leaving everything except the first relation unchanged in the end. What was the first relation has then been replaced by a single arrow with a cord going out of its target vertex.

Let $m_i\geq 0$ be the length of the (possibly trivial) cord starting in vertex number $i$ of the main string for each $1\leq i\leq n_0$. If the first relation of $Q$ begins in vertex $n_0$ and has length $l_0$, we can apply the CR-swap to it with $l=l_0$ and $m=m_{n_0}$.
This will yield a quiver which is the same as before, except the given cord and relation have been removed, and replaced by a relation of length $m_{n_0}+2$ starting in vertex $n_0-1$ with a cord of length $l_0-2$ going out of its last vertex.

Since vertex $n_0-1$ by assumption also has a cord of length $m_{n_0-1}$ going out of it, we can now apply the CR-swap to this new relation with $l=m_{n_0}+2$ and $m=m_{n_0-1}$. This will yield a relation of length $m_{n_0-1}+2$ starting in vertex $n_0-2$, with a cord of length $(m_{n_0}+2)-2=m_{n_0}$ going out of its last vertex, which is $n_0$. In other words, we have recreated the cord of length $m_{n_0}$ going out of vertex $n_0$, but the first relation on the main string now appears before this cord, rather than after. And from vertex $n_0$ there is now a single arrow to the start vertex of the cord of length $l_0-2$ we constructed in the first step.

Because each vertex from $1$ to $n_0$ has a cord of length $\geq 0$ we can repeat this process. Each time we apply the CR-swap to the current first relation on the main string, it is removed and replaced by a new relation starting one vertex further back. Also, the cord we removed in the previous step is reformed just after this new relation. After $n_{1}$ CR-swaps the relation is removed from the first relation of the main string, and all the cords have been restored. This leaves us with a quiver isomorphic to the one we started with, except the relation of length $l_0$ starting in vertex $n_0$ has been replaced by a cord of length $l_0-2$ starting in vertex $n_0+1$
\end{proof}

\begin{theorem}\label{thm:AnToQuipu}
Let $A_{n,(n_0,n_1,\ldots, n_{r} )}^{(l_{0}, l_{1}, \ldots, l_{r})}$ be a linear Nakayama algebra with almost separate relations. Then there exists a derived equivalence between $A_{n,(n_0,n_1,\ldots, n_{r} )}^{(l_{0}, l_{1}, \ldots, l_{r})}$ and the path algebra of the quipu quiver 
$$D_{(\hspace{0.35em} n_0 \hspace{0.35em},\ (n_1+1-n_0-l_0),\ \ldots\ ,\ (n_r + 1 - n_{r-1}-l_{r-1}),\ (n + 1 - n_r-l_r))}^{(l_0-2,\hspace{1.75em}  l_1-2 \hspace{1.75em},\ \ldots\ , \hspace{2.6em}  l_r-2 \hspace{2.6em})}$$
\end{theorem}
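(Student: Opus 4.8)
The plan is to present the bound quiver of $A_{n,(n_0,\ldots,n_r)}^{(l_0,\ldots,l_r)}$ itself as a quipu quiver with almost separate relations, and then to remove its relations one by one, from left to right, by repeated use of \Cref{lemma:firstRelToCord}.

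First I would observe that the linearly oriented $A_n$, carrying its $r+1$ zero relations, is a (degenerate) quipu quiver with almost separate relations: the underlying graph is a path, all relations lie on the main string, no cord begins inside a relation because there are no cords at all, and the hypothesis that the Nakayama algebra has almost separate relations, $n_{i+1}\ge n_i+l_i-1$, is exactly the condition that no two zero relations overlap in more than one arrow. Then I apply \Cref{lemma:firstRelToCord} to the first relation, of length $l_0$ from $n_0$ to $n_0+l_0$: up to derived equivalence this replaces that relation by a two-arrow path $n_0\to t_0'\to n_0+l_0$ together with a cord of length $l_0-2$ issuing from the new vertex $t_0'$. The point to check here is that the result is again a quipu quiver with almost separate relations whose relations are precisely the original relations $1,\ldots,r$ (the underlying graph is still a tree of maximal degree $3$ with its branch vertices on the main string, and the new cord attaches at $t_0'$, which is the source of relation $1$ in the overlapping case $n_1=n_0+l_0-1$ and otherwise lies strictly to the left of relation $1$, so no cord ever sits at an interior vertex of a relation). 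Hence \Cref{lemma:firstRelToCord} applies again, now to relation $1$, and iterating $r+1$ times we reach a relation-free quipu quiver $D'$ with $kD'$ derived equivalent to $A_{n,(n_0,\ldots,n_r)}^{(l_0,\ldots,l_r)}$.

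It remains to identify $D'$, and this is where the bookkeeping must be done with some care (in particular over which relations overlap). Scanning from the left, the $i$-th cord created has length $l_i-2\ge 0$, so the superscript is $(l_0-2,\ldots,l_r-2)$, with length-$2$ relations contributing trivial cords. For the subscript I count, along the new main string, the vertices strictly between consecutive attachment points. The new main string is obtained from $1,2,\ldots,n$ by contracting each block $n_i,n_i+1,\ldots,n_i+l_i$ to the three vertices $n_i\to t_i'\to n_i+l_i$; hence the vertices preceding the first attachment $t_0'$ are $1,\ldots,n_0$, giving $k_0=n_0$; the vertices strictly between $t_{i-1}'$ and $t_i'$ are $n_{i-1}+l_{i-1},\ldots,n_i$, giving $k_i=n_i+1-n_{i-1}-l_{i-1}$ for $1\le i\le r$ (this correctly returns $0$ precisely when relations $i-1$ and $i$ overlapped in an arrow); and the vertices following $t_r'$ are $n_r+l_r,\ldots,n$, giving $k_{r+1}=n+1-n_r-l_r$. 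These are exactly the parameters of the displayed quipu quiver, so $D'$ is that quiver with its standard orientation (main string left to right, cords pointing away), which finishes the proof.

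The main obstacle is not conceptual, since \Cref{lemma:firstRelToCord} does the real work; rather it lies in keeping the three conditions defining ``quipu quiver with almost separate relations'' valid at every intermediate stage (so that the lemma may be reapplied) and in tracking how the positions of the not-yet-processed relations move, so that the final vertex count matches the claimed parameters. The overlapping-relations case is the only delicate part of that verification.
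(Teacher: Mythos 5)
Your proposal is correct and follows essentially the same route as the paper: view the bound quiver of the Nakayama algebra as a quipu quiver with almost separate relations, iteratively apply \Cref{lemma:firstRelToCord} to the leftmost remaining relation to convert it into a cord of length $l_i-2$, and count the main-string vertices between attachment points to identify the resulting quipu quiver. Your global ``contraction'' description of the final main string is a slightly tidier way of doing the bookkeeping than the paper's step-by-step tracking of shifted vertex labels, but the argument is the same.
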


\begin{proof}
Since $A_{n,(n_0,n_1,\ldots, n_{r} )}^{(l_{0}, l_{1}, \ldots, l_{r})}$ is a has almost separate relations, its quiver can be considered a quipu quiver with almost separate relations. This means that the assumptions in \cref{lemma:firstRelToCord} are satisfied, and applying the CR-swap $n_0$ times to the relation starting in vertex $n_0$ will replace that relation by a cord of length $l_0-2$ going out of vertex $n_0+1$. 

Because the internal vertices of the removed relation have been moved out to form the cord, all subsequent vertices in the main string have had their distance from the beginning of the main string reduced by $l_0-2$. As a consequence the next relation along the main string, whose start vertex was $n_1$ before the swaps, now starts in vertex $n_1-l_0+2$ along the main string. Again, applying the CR-swap $n_1-l_0+2$ times to this relation will remove it and replace it by a cord of length $l_1-2$ starting in vertex $n_1-l_0+3$. Notice that the number of vertices strictly between the starting vertices for these two cords is 
$$(n_1-l_0+3)-(n_0+1)-1=n_1+1-n_0-l_0,$$
which is equal to one plus the number of vertices between the end of the first and the beginning of the second relation.

We can repeat this process for the remaining relations in the quiver, each time replacing whichever relation is first by a corresponding cord. Note that this will change the numbering of the main string, but not the total number of vertices that appear before the start of a relation, as we work our way through the quiver.

The relation which begins in what was originally vertex $n_i$ will be replaced by a cord of length $l_i-2$, and the total number of vertices (counting the main string and cords) before the start of this cord is $n_i$. The preceding cord has length $l_{i-1}-2$ and a total of $n_{i-1}$ vertices before it. So the number of vertices strictly between these two cords is
$$n_i-(n_{i-1}+1)-(l_{i-1}-2)=n_i+1-n_{i-1}-l_{i-1}.$$
Finally, when all relations have been replaced by cords, the last cord has $n_r$ vertices before it and contains $l_r-1$ vertices, including the beginning. Since the total number of vertices in the quiver is $n$, this means that the number of vertices after the last cord is $n+1-n_r-l_r$.

In other words, the quiver we end up with is precisely the quipu quiver 
$$D_{(\hspace{0.35em} n_0 \hspace{0.35em},\ (n_1+1-n_0-l_0),\ \ldots\ ,\ (n_r + 1 - n_{r-1}-l_{r-1}),\ (n + 1 - n_r-l_r))}^{(l_0-2,\hspace{1.75em}  l_1-2 \hspace{1.75em},\ \ldots\ , \hspace{2.6em}  l_r-2 \hspace{2.6em})},$$
and since there is a sequence of mutations between these quivers, their path algebras are derived equivalent. 
\end{proof}

In \cref{fig:quipuLineCorrespondenceProcedure} we have included a concrete example of the approach used in this proof, which step by step transforms the quiver of a Nakayama algebra into a quipu quiver.

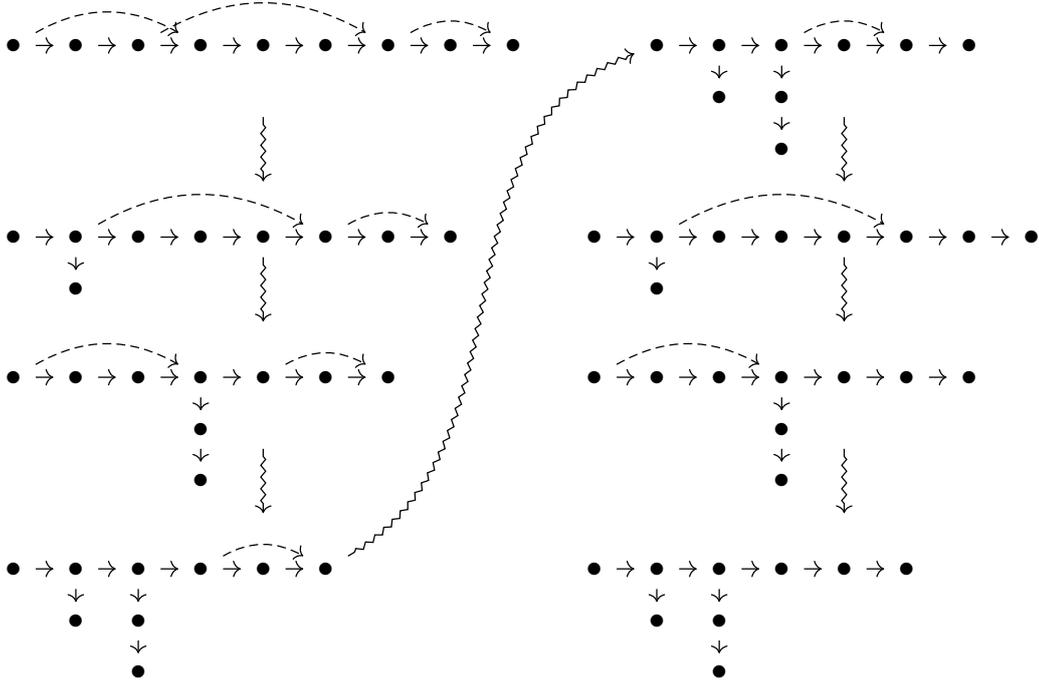
\begin{figure}[H]
\begin{tikzcd}[column sep = tiny, row sep = tiny]
\bullet \arrow[r] \arrow[rrr, dashed, bend left] & \bullet \arrow[r] & \bullet \arrow[r] \arrow[rrrr, dashed, bend left] & \bullet \arrow[r]  & \bullet \arrow[r] & \bullet \arrow[r] & \bullet \arrow[r] \arrow[rr, dashed, bend left] & \bullet \arrow[r] & \bullet & & & \bullet \arrow[r] & \bullet \arrow[r] \arrow[d] & \bullet \arrow[r] \arrow[d] \arrow[rr, dashed, bend left] & \bullet \arrow[r] & \bullet \arrow[r] & \bullet & \\
 &  & & &  {} \arrow[dd, squiggly] & &  &  & & & & & \bullet & \bullet \arrow[d] & {}\arrow[dd, squiggly]  & &  &  \\
 &  & &  &  &  &  &  &  &  & &  & & \bullet & & & &   \\ 
& & & & {} & & & & & &  & & & & {} & & &\\
\bullet \arrow[r] & \bullet \arrow[r] \arrow[d] \arrow[rrrr, dashed, bend left] & \bullet \arrow[r]  & \bullet \arrow[r] & \bullet \arrow[r] \arrow[dd, squiggly] & \bullet \arrow[r] \arrow[rr, dashed, bend left] & \bullet \arrow[r] & \bullet & & & \bullet \arrow[r] & \bullet \arrow[r] \arrow[d] \arrow[rrrr, bend left, dashed] & \bullet \arrow[r] & \bullet \arrow[r] & \bullet \arrow[r] \arrow[dd, squiggly]  & \bullet \arrow[r] & \bullet \arrow[r] & \bullet \\
& \bullet &  & & & &  &  &  && &  \bullet & &  & & & &\\
& & & & {} & & & & & & & & & & {}  & & &\\
 \bullet \arrow[r] \arrow[rrr, dashed, bend left] & \bullet \arrow[r] & \bullet \arrow[r] & \bullet \arrow[r] \arrow[d] & \bullet \arrow[r] \arrow[rr, dashed, bend left] & \bullet \arrow[r] & \bullet & &  & & \bullet \arrow[r] \arrow[rrr, bend left, dashed]& \bullet \arrow[r] & \bullet \arrow[r] & \bullet \arrow[r] \arrow[d] & \bullet \arrow[r] & \bullet \arrow[r] & \bullet & \\
 &  & & \bullet \arrow[d] & {} \arrow[dd, squiggly] & &  & &  & & & & & \bullet \arrow[d] & {} \arrow[dd, squiggly] & & & \\
 &  & & \bullet & &  &  & & & & & & & \bullet &  &  & & \\
 & & & & {} & & & & & & & & & & {} & & & \\
 \bullet \arrow[r] & \bullet \arrow[r] \arrow[d] & \bullet \arrow[r] \arrow[d] & \bullet \arrow[r] \arrow[rr, dashed, bend left] & \bullet \arrow[r] & \bullet \arrow[uuuuuuuuuuurrrrrr, squiggly, out=30, in=200]& & & & & \bullet \arrow[r] & \bullet \arrow[r] \arrow[d] & \bullet \arrow[r] \arrow[d] & \bullet \arrow[r] & \bullet \arrow[r] & \bullet& & \\
 &  \bullet & \bullet \arrow[d] & & & & & &  & & &  \bullet & \bullet \arrow[d] &  & & & &\\
 &  & \bullet & &  &  &  & & & & &  & \bullet &  & &  &  &
\end{tikzcd}
\caption{Going from the quiver of $A_{9,(1,3,7)}^{(3,4,2)}$ to $D_{(1,0,3)}^{(1,2)}$. Each squiggly arrow indicates that we apply the CR-swap to the leftmost relation in the current quipu quiver.}
\label{fig:quipuLineCorrespondenceProcedure}
\end{figure}

\begin{corollary}\label{corollary:2Rels}
If $A_{n,(n_0,n_1, \ldots, n_{r} )}^{(l_{0}, l_{1}, \ldots, l_{r})}$ is a linear Nakayama algebra with almost separate relations, and if $l_i=2$ for some $0\leq i\leq r$, then $A_{n,(n_0,n_1,\ldots, n_i,\ldots, n_{r} )}^{(l_{0}, l_{1}, \ldots, l_i, \ldots, l_{r})}$ is derived equivalent to $A_{n,(n_0,n_1,\ldots, n_{i-1}, n_{i+1},\ldots, n_{r} )}^{(l_{0}, l_{1}, \ldots, l_{i-1},l_{i+1}, \ldots, l_{r})}$.
\end{corollary}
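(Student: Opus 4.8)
The plan is to read this off \cref{thm:AnToQuipu}: apply it both to the given algebra and to the algebra with the $i$-th relation deleted, and observe that both times it produces the \emph{same} quipu quiver, the point being that $l_i=2$ makes the $i$-th cord trivial. First I would check that $A' := A_{n,(n_0,\ldots,n_{i-1},n_{i+1},\ldots,n_r)}^{(l_0,\ldots,l_{i-1},l_{i+1},\ldots,l_r)}$ is again a linear Nakayama algebra with almost separate relations: all the defining inequalities for the surviving relations are inherited from those for $A := A_{n,(n_0,\ldots,n_r)}^{(l_0,\ldots,l_r)}$, the only new adjacency being between relations $i-1$ and $i+1$, for which $n_{i-1}<n_i<n_{i+1}$, $n_{i-1}+l_{i-1}<n_i+l_i<n_{i+1}+l_{i+1}$ and $n_{i+1}\ge n_i+l_i-1> n_i\ge n_{i-1}+l_{i-1}-1$ settle it (with the obvious simplifications if $i=0$ or $i=r$).

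Next, by \cref{thm:AnToQuipu} we have $A\simeq kD$ with $D = D_{(k_0,\ldots,k_{r+1})}^{(m_0,\ldots,m_r)}$, where $k_0=n_0$, $k_j=n_j+1-n_{j-1}-l_{j-1}$ for $1\le j\le r$, $k_{r+1}=n+1-n_r-l_r$ and $m_j=l_j-2$. Because $l_i=2$, the $i$-th cord has length $m_i=0$. A cord of length $0$ contributes no vertices beyond its attachment point and no arrows, and leaves the left-to-right orientation of the main string untouched; so $D$ is literally the same quiver as
$$D_{(k_0,\;\ldots,\;k_{i-1},\;k_i+1+k_{i+1},\;k_{i+2},\;\ldots,\;k_{r+1})}^{(m_0,\;\ldots,\;m_{i-1},\;m_{i+1},\;\ldots,\;m_r)},$$
with the evident modification when $i=0$ or $i=r$ (the merged block becomes the initial, resp.\ final, straight segment of the main string).

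Finally I would apply \cref{thm:AnToQuipu} to $A'$, whose relations in order have start vertices $n_0,\ldots,n_{i-1},n_{i+1},\ldots,n_r$ and lengths $l_0,\ldots,l_{i-1},l_{i+1},\ldots,l_r$, and compare parameters. Away from the deleted position the $k$'s and $m$'s are unchanged; at the deleted position the formula of \cref{thm:AnToQuipu} gives the entry $n_{i+1}+1-n_{i-1}-l_{i-1}$, and an immediate computation shows this equals $k_i+1+k_{i+1}=(n_i+1-n_{i-1}-l_{i-1})+1+(n_{i+1}+1-n_i-l_i)$ precisely because $l_i=2$ (the $n_i$ terms then cancel). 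The boundary cases are identical, using $k_0+1+k_1=n_1$ when $i=0$ and $k_r+1+k_{r+1}=n+1-n_{r-1}-l_{r-1}$ when $i=r$; and when $r=0$ the algebra $A'=kA_n$ is simply the path algebra of the one-segment quipu quiver $D$. Thus the quipu quiver attached to $A'$ equals the one attached to $A$, which gives $A\simeq kD\simeq A'$.

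The only real work is this last comparison of parameters, i.e.\ confirming that merging the two main-string segments around the trivial cord reproduces exactly the quipu quiver that \cref{thm:AnToQuipu} assigns to $A'$; the hypothesis $l_i=2$ is used exactly there, to make the merge yield \emph{literally the same} quiver rather than merely a derived-equivalent one, so that no further mutation is required.
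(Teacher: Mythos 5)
Your proposal is correct and follows exactly the paper's route: apply \cref{thm:AnToQuipu} to both algebras and note that the cord corresponding to the length-$2$ relation is trivial, so both produce the same quipu quiver. The paper's own proof is just a terser version of this ("it's easy to see that the resulting quipus will be identical"); your parameter check $k_i+1+k_{i+1}=n_{i+1}+1-n_{i-1}-l_{i-1}$ and the verification that the reduced algebra still has almost separate relations simply make explicit the bookkeeping the paper leaves to the reader.
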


\begin{proof}
Both these algebras satisfy the conditions in \cref{thm:AnToQuipu}, and it's easy to see that the resulting quipus will be identical. This is because the only potential difference is the cord corresponding to the relation starting in $n_i$, but that cord has length $l_i-2=0$, meaning there is no cord there in both cases. So the two Nakayama algebras are derived equivalent to the same quipu algebra, and hence to each other. 
\end{proof}

\Cref{corollary:2Rels} implies that relations of length $2$ have no effect on the derived equivalence class of a linear Nakayama algebra with almost separate relations.

\begin{lemma}\label{lemma:TreeOrientation}
Two quipu quivers without relations have derived equivalent path algebras if and only if they are different orientations of the same underlying quipu.
\end{lemma}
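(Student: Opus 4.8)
The statement is an equivalence, so I would treat the two implications separately; the forward implication (orientations of one quipu $\Rightarrow$ derived equivalent) is essentially formal, and the content is in the converse.

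\emph{Orientations of the same quipu.} A quipu is a tree, so this is the classical statement that, for a quiver whose underlying graph is a tree, the bounded derived category of its path algebra depends only on the underlying graph. Concretely, I would first recall the combinatorial fact that any two acyclic orientations of a tree are connected by a finite sequence of reflections at sinks and sources: by induction on the number of vertices, delete a leaf $v$, apply the inductive hypothesis on the smaller tree, lift the resulting reflection sequence, and append at most one reflection at $v$ to correct the single edge incident to $v$ (a leaf is always a sink or a source, so the extra reflection is legitimate); the bookkeeping at the neighbour of $v$ is standard, or one cites the Bernstein--Gelfand--Ponomarev theory. A reflection $Q\rightsquigarrow s_vQ$ at a sink or source is realised by an APR tilting module, hence induces a derived equivalence $D^b(kQ)\simeq D^b(ks_vQ)$; composing the equivalences along the sequence gives $kD\simeq kD'$.

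\emph{Derived equivalent quipu algebras.} Suppose $kD\simeq kD'$ in the derived sense. Both algebras are finite-dimensional and hereditary, and their quivers have no multiple arrows, so in the basis given by the classes of the simple modules the symmetrised Euler form of $kD$ has Gram matrix $2I_n-M_D$, where $M_D$ is the adjacency matrix of the underlying quipu (and likewise for $D'$). A derived equivalence induces a group isomorphism $K_0(kD)\cong K_0(kD')$ which is an isometry for the Euler form, and it preserves the number of simple modules; hence the two quipus have the same number $n$ of vertices and $2I_n-M_D$ is congruent over $\mathbb Z$ to $2I_n-M_{D'}$. It then remains to see that the $\mathbb Z$-congruence class of a matrix of the form $2I-M_T$, $T$ a tree, determines $T$ up to isomorphism --- equivalently, that the underlying graph of a hereditary algebra of tree type is a derived invariant. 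For this I would invoke the known description of derived equivalences among hereditary algebras (for tree type the underlying graph is the complete invariant), or argue directly on the quadratic lattice $(\mathbb Z^n,\,2I-M_T)$: every generating system of $n$ norm-$2$ vectors with pairwise non-positive mutual products is related by the Weyl group of the lattice to the canonical one, so the Coxeter--Dynkin-type diagram $T$ is intrinsic to the lattice.

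\emph{Main obstacle.} The forward direction is routine given the reflection-functor/APR-tilting input. The crux is the final step of the converse --- that non-isomorphic trees cannot yield $\mathbb Z$-congruent forms $2I-M$, so that the underlying tree persists as a derived invariant. I expect to dispatch this by citing the classical recognition theory for such integral quadratic forms (respectively the classification of derived-equivalent hereditary algebras) rather than carrying out a lattice calculation by hand.
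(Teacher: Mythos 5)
Your forward direction is correct and is essentially the paper's: the paper simply cites Theorem 1.2 of \cite{AO2013} for the fact that any two orientations of a tree have derived equivalent path algebras, and your reflection-at-sinks/APR-tilting sketch is the standard proof of that cited result.

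For the converse there is a real issue with the route you sketch. The paper again just cites (the proof of Corollary 4 in \cite{CK2006}), and your fallback of ``invoke the classification of derived-equivalent hereditary algebras'' is the same move and is fine: for connected hereditary path algebras, derived equivalence forces the two quivers to be related by a sequence of sink/source reflections, hence to share their underlying graph. But your proposed self-contained argument via the Grothendieck group does not work as stated. Passing to the symmetrised Euler form retains only the $\mathbb{Z}$-congruence class of $2I-M_T$, which is a strictly weaker invariant than the derived category, and there is no ``classical recognition theory'' asserting that this congruence class determines the tree $T$; indeed the analogous spectral/Coxeter-polynomial invariants are well known \emph{not} to separate trees. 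Your key lattice claim --- that every basis of norm-$2$ vectors with pairwise non-positive products in $(\mathbb{Z}^n,\,2I-M_T)$ is Weyl-conjugate to the standard one --- is, outside the positive definite and affine cases, the conjugacy of root bases for indefinite Kac--Moody lattices; that is a deep theorem with hypotheses you have not verified (you would need the image basis to consist of real roots forming an actual root basis, and to control sign changes and diagram automorphisms), not an elementary fact about integral quadratic forms. Since quipus are generically of wild type, you cannot restrict to the definite/affine cases. The safe proof of the converse goes through the tilting theory of hereditary algebras (derived equivalences between hereditary path algebras are compositions of reflections), not through $K_0$; if you intend to cite, cite that, as the paper does.
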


\begin{proof}
From theorem $1.2$ in \cite{AO2013} we know that if $Q$ is a quiver whose underlying graph is a tree, and $Q'$ is a different orientation of the same tree, then $kQ$ and $kQ'$ are derived equivalent. Because quipus are trees, one direction of the lemma follows.

The other direction follows from the proof of corollary 4 in \cite{CK2006}.
\end{proof}

\begin{theorem}\label{thm:QuipuToAn}
Let $P_{(k_0,k_1,\ldots,k_r,k_{r+1})}^{(m_0,m_1,\ldots,m_r)}$ be a quipu, and let 
$$n_i = k_0 + \sum\limits_{j=1}^{i} (m_{j-1} + k_{j} + 1) \text{ for } 1\leq i\leq r+1.$$ 
If $D$ is a quipu quiver, then there exists a derived equivalence
$$kD\simeq A_{n_{r+1},(k_0,n_1,n_2,\ldots, n_{r} )}^{(m_{0}+2, m_{1}+2, \ldots, m_{r}+2)}$$
if and only if $D$ is $P_{(k_0,k_1,\ldots,k_r,k_{r+1})}^{(m_0,m_1,\ldots,m_r)}$ with some orientation.
\end{theorem}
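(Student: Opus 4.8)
The plan is to deduce this statement from \cref{thm:AnToQuipu} and \cref{lemma:TreeOrientation} together with one index computation. Write $B := A_{n_{r+1},(k_0,n_1,\ldots, n_r)}^{(m_0+2,\ldots,m_r+2)}$. The first thing I would do is check that $B$ really is a linear Nakayama algebra with almost separate relations in the sense of the introduction. The relation lengths $m_i+2$ are all $\geq 2$; setting $n_0 := k_0$ one has $n_{i+1}-n_i = m_i+k_{i+1}+1$ for $0\leq i\leq r$, which is $\geq m_i+1 = (m_i+2)-1$, so consecutive start vertices are strictly increasing and the overlap of consecutive relations is at most one arrow; and $n_{r+1}-n_r = m_r+k_{r+1}+1\geq m_r+2$ as soon as $k_{r+1}\geq 1$. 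The degenerate cases $k_0=0$ or $k_{r+1}=0$ I would deal with first by replacing the chosen presentation of the quipu with another one afforded by \cref{obs:QuipuOrientations} (which represents the same quipu) so that afterwards $k_0,k_{r+1}\geq 1$; from then on this may be assumed.

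The key step is to apply \cref{thm:AnToQuipu} to $B$. With start vertices $n_0 = k_0$, $n_1,\ldots,n_r$ and relation lengths $l_i = m_i+2$, that theorem produces a derived equivalence between $B$ and the path algebra of the quipu quiver whose superscript is $(l_0-2,\ldots,l_r-2)$ and whose subscript is $\bigl(n_0,\ n_1+1-n_0-l_0,\ \ldots,\ n_r+1-n_{r-1}-l_{r-1},\ n_{r+1}+1-n_r-l_r\bigr)$. Substituting the definition of the $n_i$, each difference $n_i+1-n_{i-1}-l_{i-1}$ simplifies to $k_i$ (using $n_i-n_{i-1}=m_{i-1}+k_i+1$), the last entry $n_{r+1}+1-n_r-l_r$ simplifies to $k_{r+1}$, and $l_i-2 = m_i$; that is, $B$ is derived equivalent to $kD_{(k_0,\ldots,k_{r+1})}^{(m_0,\ldots,m_r)}$, whose underlying graph is exactly the quipu $P := P_{(k_0,\ldots,k_{r+1})}^{(m_0,\ldots,m_r)}$.

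Both implications now follow from \cref{lemma:TreeOrientation}, using that $D_{(k_0,\ldots,k_{r+1})}^{(m_0,\ldots,m_r)}$ is one particular orientation of $P$. For the ``if'' direction: if $D$ is any orientation of $P$, then $kD \simeq kD_{(k_0,\ldots,k_{r+1})}^{(m_0,\ldots,m_r)} \simeq B$ by \cref{lemma:TreeOrientation} (both are orientations of the tree $P$). For the ``only if'' direction: if $kD\simeq B$, then $kD\simeq kD_{(k_0,\ldots,k_{r+1})}^{(m_0,\ldots,m_r)}$; since $D$ and $D_{(k_0,\ldots,k_{r+1})}^{(m_0,\ldots,m_r)}$ are quipu quivers carrying no relations, hence quivers on a tree, \cref{lemma:TreeOrientation} forces them to be orientations of the same quipu, so $D$ is an orientation of $P$.

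The only real work is the bookkeeping in the middle step --- checking that $n_i+1-n_{i-1}-l_{i-1}$ reduces to $k_i$ and $n_{r+1}+1-n_r-l_r$ to $k_{r+1}$ --- together with the preliminary verification that $B$ is a genuine Nakayama algebra with almost separate relations, which is the only place the degenerate presentations of a quipu need to be normalized via \cref{obs:QuipuOrientations}. I expect no conceptual difficulty beyond these; the substance has already been carried by \cref{thm:AnToQuipu} and \cref{lemma:TreeOrientation}.
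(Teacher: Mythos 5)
Your proposal is correct and follows essentially the same route as the paper: both reduce the statement to \cref{thm:AnToQuipu} plus \cref{lemma:TreeOrientation}, differing only in that you substitute the Nakayama data into \cref{thm:AnToQuipu} and simplify to recover the $k_i$, whereas the paper sets up the correspondence from the quipu side and solves for the $n_i$ by induction. The only caveat is cosmetic: in the degenerate cases $k_0=0$ or $k_{r+1}=0$ the Nakayama algebra in the statement is not well formed (a relation would start at vertex $0$ or end past vertex $n$), so renormalizing via \cref{obs:QuipuOrientations} proves the theorem for the new presentation rather than the literal original one --- but the paper tacitly assumes these cases away, so your treatment is if anything more careful.
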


\begin{proof}
We will show that the derived equivalence exists for one orientation of the quipu, namely $D_{(k_0,k_1,\ldots,k_r,k_{r+1})}^{(m_0,m_1,\ldots,m_r)}$. The rest of the theorem follows from \cref{lemma:TreeOrientation}.
If we set 
\begin{align*}
m_i &= l_i - 2 \\
k_0 &= n_0 \\
k_i &= n_i + 1 - n_{i-1} - l_{i-1}, \quad 1\leq i\leq r+1,
\end{align*}
then we know from \cref{thm:AnToQuipu} that there is a derived equivalence
$$kD_{(k_0,k_1,\ldots,k_r,k_{r+1})}^{(m_0,m_1,\ldots,m_r)}\simeq A_{n,(n_0,n_1,\ldots, n_{r} )}^{(l_{0}, l_{1}, \ldots, l_{r})}$$
So we get that $n_0=k_0$ and $l_i=m_i+2$ for all $i$, and solving the last equation for $n_i$ gives $n_i=n_{i-1}+l_{i-1}+k_{i}-1$ for $1\leq i\leq r+1$. This implies that $n_1=k_0+m_{0}+k_1+1$, and an easy induction then shows that 
$$n_i = k_0 + \sum\limits_{j=1}^{i} (m_{j-1} + k_{j} + 1) \text{ for } 1\leq i\leq r+1.$$ 
In particular, we have that
$$n_{r+1} = k_0 + \sum\limits_{j=1}^{r+1} (m_{j-1} + k_{j} + 1) = r+1+\sum\limits_{i=0}^{r+1}k_i + \sum\limits_{i=0}^r m_i,$$
which is equal to the number of vertices in $D_{(k_0,k_1,\ldots,k_r,k_{r+1})}^{(m_0,m_1,\ldots,m_r)}$.
If we now plug these values into $A_{n,(n_0,n_1,n_2,\ldots, n_{r} )}^{(l_{0}, l_{1}, \ldots, l_{r})}$, we get that there is a derived equivalence 
$$kD_{(k_0,k_1,\ldots,k_r,k_{r+1})}^{(m_0,m_1,\ldots,m_r)}\simeq A_{n_{r+1},(k_0,n_1,n_2,\ldots, n_{r} )}^{(m_{0}+2, m_{1}+2, \ldots, m_{r}+2)},$$
which is what we wanted to show. The above Nakayama algebra is derived equivalent to the path algebra of one orientation of the quipu $P_{(k_0,k_1,\ldots,k_r,k_{r+1})}^{(m_0,m_1,\ldots,m_r)}$, and hence, by \cref{lemma:TreeOrientation} it is derived equivalent to the path algebras of all possible orientations of the quipu.
\end{proof}

With \cref{thm:AnToQuipu} and \cref{thm:QuipuToAn} we have explicit formulas to translate between quipu algebras and linear Nakayama algebras with almost separate relations, in a way that preserves derived equivalence.

\begin{corollary}\label{cor:EquivNakayamaAlgebras}
If $A_{n,(n_0,\ldots,n_r)}^{(l_0,\ldots, l_r)}$ is a linear Nakayama algebra with almost separate relations, where $l_i\geq 3$ for all $i$, then the following Nakayama algebras are derived equivalent.
\begin{align*}
&A_{n,(n_0,n_1,\ldots,n_{r-1},n_r)}^{(l_0, l_1, \ldots, l_{r-1}, l_r)}  &&A_{n,(n-n_r-l_r+1,n-n_{r-1}-l_{r-1}+1,\ldots,n-n_0-l_0+1)}^{(l_r, l_{r-1}, \ldots, l_{1}, l_0)}\\
&A_{n,(n_0,n_1,\ldots,n_{r-1},n_r)}^{(l_0, l_1, \ldots, l_{r-1}, n-n_r-l_r+3)} &&A_{n,(l_r-2,n-n_{r-1}-l_{r-1}+1,\ldots,n-n_0-l_0+1)}^{(n-n_r-l_r+3, l_{r-1}, \ldots, l_{1}, l_0)}    \\
&A_{n,(l_0-2,n_1,\ldots,n_{r-1},n_r)}^{(n_0+2, l_1, \ldots, l_{r-1}, l_r)}  && A_{n,(n-n_r-l_r+1,n-n_{r-1}-l_{r-1}+1,\ldots,n-n_0-l_0+1)}^{(l_r, l_{r-1}, \ldots, l_{1}, n_0+l_0)}\\
&A_{n,(l_0-2,n_1,\ldots,n_{r-1},n_r)}^{(n_0+2, l_1, \ldots, l_{r-1}, n-n_r-l_r+3)} &&A_{n,(l_r-2,n-n_{r-1}-l_{r-1}+1,\ldots,n-n_0-l_0+1)}^{(n-n_r-l_r+3, l_{r-1}, \ldots, l_{1}, n_0+l_0)}. 
\end{align*}
Also, adding a set of almost separate relations of length $2$ to available positions in any of these algebras will not change the derived equivalence class.
\end{corollary}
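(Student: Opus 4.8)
The plan is to route every one of the eight displayed Nakayama algebras through \cref{thm:AnToQuipu}, landing on the path algebra of some orientation of a single quipu, and then to close up with \cref{lemma:TreeOrientation}. Concretely, I would first apply \cref{thm:AnToQuipu} to the base algebra $A_{n,(n_0,\ldots,n_r)}^{(l_0,\ldots,l_r)}$, which (since all $l_i\ge 3$, so all cords are nontrivial) produces the quipu quiver $D_{(k_0,\ldots,k_{r+1})}^{(m_0,\ldots,m_r)}$ with
$$k_0=n_0,\qquad k_i=n_i+1-n_{i-1}-l_{i-1}\ \ (1\le i\le r),\qquad k_{r+1}=n+1-n_r-l_r,\qquad m_i=l_i-2 .$$
Write $P$ for the underlying (unoriented) quipu. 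Every other algebra in the list will be shown to produce, under \cref{thm:AnToQuipu} (possibly after a reversal of all arrows), a quipu quiver whose underlying graph is again $P$.

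For the left-hand column, note that the second, third and fourth algebras differ from the base one only by: (i) replacing the last relation's length $l_r$ by $k_{r+1}+2=n-n_r-l_r+3$; (ii) replacing the first relation's start/length pair $(n_0,l_0)$ by $(l_0-2,\,n_0+2)$; and (iii) doing both. Substituting these data into the formula of \cref{thm:AnToQuipu} is a short calculation and shows they are sent, respectively, to the quipu quivers obtained from $D_{(k_0,\ldots,k_{r+1})}^{(m_0,\ldots,m_r)}$ by interchanging $m_r\leftrightarrow k_{r+1}$, by interchanging $m_0\leftrightarrow k_0$, and by doing both. By \cref{obs:QuipuOrientations} these three moves do not change the underlying quipu, so all four left-column algebras are derived equivalent to $kD'$ for various orientations $D'$ of $P$.

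For the right-hand column, I would observe that reversing every arrow of the linearly oriented quiver of a linear Nakayama algebra --- equivalently the relabelling $j\mapsto n+1-j$ --- sends a relation of length $l_i$ starting at vertex $n_i$ to a relation of length $l_i$ starting at vertex $n-n_i-l_i+1$, and reverses the order of the relations along the line. Applied to the four left-column algebras this yields exactly the four right-column algebras. Since passing to the opposite quiver is merely a change of orientation of a tree, \cref{lemma:TreeOrientation} already identifies each right-column algebra with the corresponding left-column one up to derived equivalence; alternatively one feeds each right-column algebra straight into \cref{thm:AnToQuipu} and checks, via the order-reversing isomorphism in \cref{obs:QuipuOrientations}, that the output is again an orientation of $P$. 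Either way, all eight algebras are derived equivalent to path algebras of orientations of the one quipu $P$, hence mutually derived equivalent by \cref{lemma:TreeOrientation}. The final sentence about relations of length $2$ is then immediate from \cref{corollary:2Rels}, since such relations are invisible to the quipu produced by \cref{thm:AnToQuipu}.

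I expect the only real obstacle to be the index bookkeeping: one must be careful about which vertices a relation ``of length $l_i$ beginning at $n_i$'' actually occupies (the $l_i+1$ vertices $n_i,\dots,n_i+l_i$), so that the off-by-one constants in $n-n_i-l_i+1$ and $n-n_r-l_r+3=k_{r+1}+2$ come out correctly, and one must correctly match the arrow-reversal of the Nakayama quiver with the order-reversing isomorphism of quipu expressions in \cref{obs:QuipuOrientations}. Once the dictionary between the eight expressions listed in \cref{obs:QuipuOrientations} and the eight Nakayama algebras is pinned down, the statement follows formally from \cref{thm:AnToQuipu} and \cref{lemma:TreeOrientation}.
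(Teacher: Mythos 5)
Your proposal is correct and follows essentially the same route as the paper: translate to a quipu via \cref{thm:AnToQuipu}, use \cref{obs:QuipuOrientations} to identify the eight expressions of that quipu with the eight listed algebras (via \cref{thm:QuipuToAn}, or equivalently your reversed-direction check), and invoke \cref{lemma:TreeOrientation} and \cref{corollary:2Rels} for the remaining claims. The explicit verification that the left-column algebras realize the $m_0\leftrightarrow k_0$ and $m_r\leftrightarrow k_{r+1}$ swaps, and that arrow reversal gives the right-column algebras, is exactly the bookkeeping the paper leaves implicit.
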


\begin{proof}
We can translate $A_{n,(n_0,\ldots,n_r)}^{(l_0,\ldots, l_r)}$ into a quipu using \cref{thm:AnToQuipu}. Then, the other Nakayama algebras are obtained by applying  \cref{thm:QuipuToAn} to the other orientations of this quipu, given by \cref{obs:QuipuOrientations}. The last part follows from \cref{corollary:2Rels}.
\end{proof}

\begin{remark}
Any quipu quiver with almost separate relations can be viewed as an intermediate step between a Nakayama algebra with almost separate relations and a quipu quiver with no relations. Hence, this derived classification also covers all quipu quivers with almost separate relations, although the combinatorics for describing these become much more involved.
\end{remark}

\section{Complete classification for $n\leq 8$}
We will now explicitly write out the complete classification up to derived equivalence of all linear Nakayama algebras of length $n\leq 8$ with almost separate relations. 
From \cref{corollary:2Rels} we know that we can limit our considerations to algebras $A_{n,(n_0,\ldots,n_r)}^{(l_0,\ldots, l_r)}$ where $l_i\geq 3$ for all $i$. 

We now make a list of every quipu of order $\leq 8$. For each quipu we choose an orientation to get a quipu quiver, and we apply \cref{thm:QuipuToAn} to get the corresponding linear Nakayama algebra. Finally we apply \cref{cor:EquivNakayamaAlgebras} to get the other Nakayama algebras in the same class.

We stopped at $n=8$ for space concerns, but we could easily continue the list for as large $n$ as we need. The following observation is an easy consequence of \cref{cor:EquivNakayamaAlgebras}
\begin{observation}
In each derived equivalence class there are at most $8$ different such algebras whose relations all have length $\geq 3$. 
\end{observation}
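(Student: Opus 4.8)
This observation is the converse direction of \cref{cor:EquivNakayamaAlgebras}: that corollary exhibits (at most) $8$ derived-equivalent algebras inside the class, and here we must check that there are no further algebras in the class all of whose relations have length $\ge 3$. The plan is to turn each such algebra into an expression, in the notation $P_{(\ldots)}^{(\ldots)}$, of a single fixed quipu, and then to count how many such expressions there can be.

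Fix a linear Nakayama algebra $A=A_{n,(n_0,\ldots,n_r)}^{(l_0,\ldots,l_r)}$ with almost separate relations and $l_i\ge 3$ for all $i$. By \cref{thm:AnToQuipu} it is derived equivalent to $kD_A$, where $D_A=D_{(k_0,\ldots,k_{r+1})}^{(m_0,\ldots,m_r)}$ has $m_i=l_i-2$, $k_0=n_0$, $k_i=n_i+1-n_{i-1}-l_{i-1}$ for $1\le i\le r$, and $k_{r+1}=n+1-n_r-l_r$. The hypotheses on $A$ translate into $m_i\ge 1$ for all $i$, $k_0\ge 1$ (vertices are labelled from $1$), $k_{r+1}\ge 1$ (since $n_r+l_r\le n$), and $k_i\ge 0$; in particular every cord of the underlying quipu $P$ of $D_A$ is nontrivial, so the $r+1$ cord-vertices are interior to the main string and are precisely the degree-$3$ vertices of $P$. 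If $A'$ is any other algebra of the class with all relations of length $\ge 3$, then $kD_A\simeq kD_{A'}$, so by \cref{lemma:TreeOrientation} the quivers $D_A$ and $D_{A'}$ are orientations of one and the same quipu; hence all these algebras give rise to expressions of a single fixed quipu $P$.

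Two points then finish the argument. First, applying the construction of \cref{thm:QuipuToAn} to the expression $D_{(k_0,\ldots,k_{r+1})}^{(m_0,\ldots,m_r)}$ returns $A$ itself: from $n_i=k_0+\sum_{j=1}^i(m_{j-1}+k_j+1)$ one recovers the original start vertices and lengths $l_i=m_i+2$, and $n_{r+1}$ equals the number of vertices of $D_A$, which is $n$. Thus the assignment $A\mapsto D_A$ has a left inverse and is injective on the class. Second, an expression of $P$ in which every cord is nontrivial and the two end segments $k_0,k_{r+1}$ are positive amounts to the following data: an ordering of the degree-$3$ vertices along their common path, together with, at each of the two extreme degree-$3$ vertices, a choice of which of the two pendant paths hanging off it (the designated cord, or the remaining stretch of main string) is to be called the cord. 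These are exactly the three commuting involutions recorded in \cref{obs:QuipuOrientations} — reversal, and the two end-swaps $k_0\leftrightarrow m_0$ and $k_{r+1}\leftrightarrow m_r$ — so there are at most $2\cdot 2\cdot 2=8$ of them, with coincidences precisely when $P$ has a suitable symmetry. Combining the two points, the class contains at most $8$ algebras whose relations all have length $\ge 3$.

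The step I expect to require genuine care is the last one: checking that the apparent freedom in writing a given quipu in the notation $P_{(\ldots)}^{(\ldots)}$ collapses, once all cords are forced to be nontrivial, to exactly the $8$ symmetries of \cref{obs:QuipuOrientations} and to nothing larger — in particular that one cannot obtain further expressions by re-choosing the main string as a different path through the tree. Everything else (the translation of the $l_i\ge 3$ and almost-separateness hypotheses into inequalities on the $k_i,m_i$, and the inverse computation via \cref{thm:QuipuToAn}) is bookkeeping with the formulas already established.
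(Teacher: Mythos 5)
Your proof is correct and follows the route the paper intends: the paper states the observation as an immediate consequence of \cref{cor:EquivNakayamaAlgebras}, the point being that the eight listed algebras are exactly the images under \cref{thm:QuipuToAn} of the eight expressions of the fixed quipu from \cref{obs:QuipuOrientations}, with \cref{lemma:TreeOrientation} ruling out anything else in the class. Your write-up fills in the two details the paper leaves implicit (injectivity via the invertibility of the translation formulas, and the count of admissible expressions when all cords are nontrivial); the only quibble is that the reversal does not literally commute with the two end-swaps (the group generated is dihedral of order $8$), but your direct count of $2\cdot 2\cdot 2$ choices does not depend on this.
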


These derived equivalence classes will also contain the same algebras decorated with all possible sets of relations of length $2$.

Note that most of these classes will contain other linear Nakayama algebras, with bigger overlap between their relations, but in this paper we focus only on Nakayama algebras with almost separate relations.

In the following table, the quipus that are equal to (extended) Dynkin diagrams are labelled as such, for the rest we use our notation for quipus.

\begin{figure}[H]
\begin{tabular}{|cc|c|}
\hline
\multicolumn{2}{|c|}{\textbf{Quipu}} & \textbf{Corresponding Nakayama algebras} \\
\hline 
\rowcolor{gray!20} $\mathbb{A}_1$ &
    \begin{tikzpicture}[scale=.4]
    \draw[xshift=0 cm,thick,fill=black] (0 cm,0) circle (.3cm);
    \end{tikzpicture}
     & $A_1$ \\
\hline 
$\mathbb{A}_2$ &
    \begin{tikzpicture}[scale=.4]
    \foreach \x in {0,...,1}
    \draw[xshift=\x cm,thick,fill=black] (\x cm,0) circle (.3cm);
    \draw[xshift=0.15 cm,thick] (0.15 cm,0) -- +(1.4 cm,0);
    \end{tikzpicture}
     & $A_2$ \\
\hline 
\rowcolor{gray!20} $\mathbb{A}_3$ &
    \begin{tikzpicture}[scale=.4]
    \foreach \x in {0,...,2}
    \draw[xshift=\x cm,thick,fill=black] (\x cm,0) circle (.3cm);
    \foreach \y in {0.15,...,1.15}
    \draw[xshift=\y cm,thick] (\y cm,0) -- +(1.4 cm,0);
    \end{tikzpicture}
     & $A_3$ \\
\hline 
$\mathbb{A}_4$ &
    \begin{tikzpicture}[scale=.4]
    \foreach \x in {0,...,3}
    \draw[xshift=\x cm,thick,fill=black] (\x cm,0) circle (.3cm);
    \foreach \y in {0.15,...,2.15}
    \draw[xshift=\y cm,thick] (\y cm,0) -- +(1.4 cm,0);
    \end{tikzpicture}
     & $A_4$ \\
\hline
$\mathbb{D}_4$ &
    \begin{tikzpicture}[scale=.4]
    \foreach \x in {0,...,2}
    \draw[thick,xshift=\x cm, fill=black] (\x cm,0) circle (3 mm);
    \foreach \y in {0,...,1}
    \draw[thick,xshift=\y cm] (\y cm,0) ++(.3 cm, 0) -- +(14 mm,0);
    \draw[thick, fill=black] (2 cm,1 cm) circle (3 mm);
    \draw[thick] (2 cm, 3mm) -- +(0, 0.7 cm);
    \end{tikzpicture}
     & $A_{4,(1)}^{(3)}$ \\
\hline 
\rowcolor{gray!20} $\mathbb{A}_5$ &
    \begin{tikzpicture}[scale=.4]
    \foreach \x in {0,...,4}
    \draw[xshift=\x cm,thick,fill=black] (\x cm,0) circle (.3cm);
    \foreach \y in {0.15,...,3.15}
    \draw[xshift=\y cm,thick] (\y cm,0) -- +(1.4 cm,0);
    \end{tikzpicture}
    & $A_5$ \\
\hline 
\rowcolor{gray!20} $\mathbb{D}_5$ &
    \begin{tikzpicture}[scale=.4]
    \foreach \x in {0,...,3}
    \draw[thick,xshift=\x cm, fill=black] (\x cm,0) circle (3 mm);
    \foreach \y in {0,...,2}
    \draw[thick,xshift=\y cm] (\y cm,0) ++(.3 cm, 0) -- +(14 mm,0);
    \draw[thick, fill=black] (2 cm,1 cm) circle (3 mm);
    \draw[thick] (2 cm, 3mm) -- +(0, 0.7 cm);
    \end{tikzpicture}
    & $A_{5,(1)}^{(3)}$, $A_{5,(1)}^{(4)}$, $A_{5,(2)}^{(3)}$\\
\hline 
$\mathbb{A}_6$ &
    \begin{tikzpicture}[scale=.4]
    \foreach \x in {0,...,5}
    \draw[xshift=\x cm,thick,fill=black] (\x cm,0) circle (.3cm);
    \foreach \y in {0.15,...,4.15}
    \draw[xshift=\y cm,thick] (\y cm,0) -- +(1.4 cm,0);
    \end{tikzpicture}
    & $A_6$\\
\hline 
$\mathbb{D}_6$ &
    \begin{tikzpicture}[scale=.4]
    \foreach \x in {0,...,4}
    \draw[thick,xshift=\x cm, fill=black] (\x cm,0) circle (3 mm);
    \foreach \y in {0,...,3}
    \draw[thick,xshift=\y cm] (\y cm,0) ++(.3 cm, 0) -- +(14 mm,0);
    \draw[thick, fill=black] (2 cm,1 cm) circle (3 mm);
    \draw[thick] (2 cm, 3mm) -- +(0, 0.7 cm);
    \end{tikzpicture}
    & $A_{6,(1)}^{(3)}$, $A_{6,(1)}^{(5)}$, $A_{6,(3)}^{(3)}$ \\
\hline  
$\mathbb{E}_6$ &
    \begin{tikzpicture}[scale=.4]
    \foreach \x in {0,...,4}
    \draw[thick,xshift=\x cm, fill=black] (\x cm,0) circle (3 mm);
    \foreach \y in {0,...,3}
    \draw[thick,xshift=\y cm] (\y cm,0) ++(.3 cm, 0) -- +(14 mm,0);
    \draw[thick, fill=black] (4 cm,1 cm) circle (3 mm);
    \draw[thick] (4 cm, 3mm) -- +(0, 0.7 cm);
    \end{tikzpicture}
    & $A_{6,(1)}^{(4)}$, $A_{6,(2)}^{(3)}$, $A_{6,(2)}^{(4)}$ \\
\hline 
$\tilde{\mathbb{D}}_5$ &
    \begin{tikzpicture}[scale=.4]
    \foreach \x in {0,...,3}
    \draw[thick,xshift=\x cm, fill=black] (\x cm,0) circle (3 mm);
    \foreach \y in {0,...,2}
    \draw[thick,xshift=\y cm] (\y cm,0) ++(.3 cm, 0) -- +(14 mm,0);
    \draw[thick, fill=black] (2 cm,1 cm) circle (3 mm);
    \draw[thick] (2 cm, 3mm) -- +(0, 0.7 cm);
    \draw[thick, fill=black] (4 cm,1 cm) circle (3 mm);
    \draw[thick] (4 cm, 3mm) -- +(0, 0.7 cm);
    \end{tikzpicture}
    & $A_{6,(1,3)}^{(3,3)}$ \\
\hline 
\rowcolor{gray!20} $\mathbb{A}_7$ &
    \begin{tikzpicture}[scale=.4]
    \foreach \x in {0,...,6}
    \draw[xshift=\x cm,thick,fill=black] (\x cm,0) circle (.3cm);
    \foreach \y in {0.15,...,5.15}
    \draw[xshift=\y cm,thick] (\y cm,0) -- +(1.4 cm,0);
    \end{tikzpicture}
     & $A_7$ \\
\hline
\rowcolor{gray!20} $\mathbb{D}_7$ &
    \begin{tikzpicture}[scale=.4]
    \foreach \x in {0,...,5}
    \draw[thick,xshift=\x cm, fill=black] (\x cm,0) circle (3 mm);
    \foreach \y in {0,...,4}
    \draw[thick,xshift=\y cm] (\y cm,0) ++(.3 cm, 0) -- +(14 mm,0);
    \draw[thick, fill=black] (2 cm,1 cm) circle (3 mm);
    \draw[thick] (2 cm, 3mm) -- +(0, 0.7 cm);
    \end{tikzpicture}
     & $A_{7,(1)}^{(3)}$, $A_{7,(1)}^{(6)}$, $A_{7,(4)}^{(3)}$\\
\hline 
\rowcolor{gray!20} $\mathbb{E}_7$ &
    \begin{tikzpicture}[scale=.4]
    \foreach \x in {0,...,5}
    \draw[thick,xshift=\x cm, fill=black] (\x cm,0) circle (3 mm);
    \foreach \y in {0,...,4}
    \draw[thick,xshift=\y cm] (\y cm,0) ++(.3 cm, 0) -- +(14 mm,0);
    \draw[thick, fill=black] (4 cm,1 cm) circle (3 mm);
    \draw[thick] (4 cm, 3mm) -- +(0, 0.7 cm);
    \end{tikzpicture}
    & $A_{7,(1)}^{(4)}$, $A_{7,(1)}^{(5)}$, $A_{7,(2)}^{(3)}$, $A_{7,(2)}^{(5)}$, $A_{7,(3)}^{(3)}$, $A_{7,(3)}^{(4)}$\\
\hline 
\rowcolor{gray!20} $\tilde{\mathbb{D}}_6$ &
    \begin{tikzpicture}[scale=.4]
    \foreach \x in {0,...,4}
    \draw[thick,xshift=\x cm, fill=black] (\x cm,0) circle (3 mm);
    \foreach \y in {0,...,3}
    \draw[thick,xshift=\y cm] (\y cm,0) ++(.3 cm, 0) -- +(14 mm,0);
    \draw[thick, fill=black] (2 cm,1 cm) circle (3 mm);
    \draw[thick] (2 cm, 3mm) -- +(0, 0.7 cm);
    \draw[thick, fill=black] (6 cm,1 cm) circle (3 mm);
    \draw[thick] (6 cm, 3mm) -- +(0, 0.7 cm);
    \end{tikzpicture}
    & $A_{7,(1,4)}^{(3,3)}$\\
\hline 
\rowcolor{gray!20} $\tilde{\mathbb{E}}_6$ &
    \begin{tikzpicture}[scale=.4]
    \foreach \x in {0,...,4}
    \draw[thick,xshift=\x cm, fill=black] (\x cm,0) circle (3 mm);
    \foreach \y in {0,...,3}
    \draw[thick,xshift=\y cm] (\y cm,0) ++(.3 cm, 0) -- +(14 mm,0);
    \draw[thick, fill=black] (4 cm,1 cm) circle (3 mm);
    \draw[thick, fill=black] (4 cm,2 cm) circle (3 mm);
    \draw[thick] (4 cm, 3mm) -- +(0, 1.7 cm);
    \end{tikzpicture}
    & $A_{7,(2)}^{(4)}$ \\
\hline
\rowcolor{gray!20} $P_{(1,0,2)}^{(1,1)}$ &
    \begin{tikzpicture}[scale=.4]
    \foreach \x in {0,...,4}
    \draw[thick,xshift=\x cm, fill=black] (\x cm,0) circle (3 mm);
    \foreach \y in {0,...,3}
    \draw[thick,xshift=\y cm] (\y cm,0) ++(.3 cm, 0) -- +(14 mm,0);
    \draw[thick, fill=black] (2 cm,1 cm) circle (3 mm);
    \draw[thick] (2 cm, 3mm) -- +(0, 0.7 cm);
    \draw[thick, fill=black] (4 cm,1 cm) circle (3 mm);
    \draw[thick] (4 cm, 3mm) -- +(0, 0.7 cm);
    \end{tikzpicture}
    & $A_{7,(1,3)}^{(3,3)}$, $A_{7,(1,3)}^{(3,4)}$, $A_{7,(1,4)}^{(4,3)}$, $A_{7,(2,4)}^{(3,3)}$\\
\hline 
$\mathbb{A}_8$ &
    \begin{tikzpicture}[scale=.4]
    \foreach \x in {0,...,7}
    \draw[xshift=\x cm,thick,fill=black] (\x cm,0) circle (.3cm);
    \foreach \y in {0.15,...,6.15}
    \draw[xshift=\y cm,thick] (\y cm,0) -- +(1.4 cm,0);
    \end{tikzpicture}
     & $A_8$ \\
\hline
$\mathbb{D}_8$ &
    \begin{tikzpicture}[scale=.4]
    \foreach \x in {0,...,6}
    \draw[thick,xshift=\x cm, fill=black] (\x cm,0) circle (3 mm);
    \foreach \y in {0,...,5}
    \draw[thick,xshift=\y cm] (\y cm,0) ++(.3 cm, 0) -- +(14 mm,0);
    \draw[thick, fill=black] (2 cm,1 cm) circle (3 mm);
    \draw[thick] (2 cm, 3mm) -- +(0, 0.7 cm);
    \end{tikzpicture}
     & $A_{8,(1)}^{(3)}$, $A_{8,(1)}^{(7)}$, $A_{8,(5)}^{(3)}$\\
\hline 
$\mathbb{E}_8$ &
    \begin{tikzpicture}[scale=.4]
    \foreach \x in {0,...,6}
    \draw[thick,xshift=\x cm, fill=black] (\x cm,0) circle (3 mm);
    \foreach \y in {0,...,5}
    \draw[thick,xshift=\y cm] (\y cm,0) ++(.3 cm, 0) -- +(14 mm,0);
    \draw[thick, fill=black] (4 cm,1 cm) circle (3 mm);
    \draw[thick] (4 cm, 3mm) -- +(0, 0.7 cm);
    \end{tikzpicture}
    & $A_{8,(1)}^{(4)}$, $A_{8,(1)}^{(6)}$, $A_{8,(2)}^{(3)}$, $A_{8,(2)}^{(6)}$, $A_{8,(4)}^{(3)}$, $A_{8,(4)}^{(4)}$\\
\hline 
$\tilde{\mathbb{D}}_7$ &
    \begin{tikzpicture}[scale=.4]
    \foreach \x in {0,...,5}
    \draw[thick,xshift=\x cm, fill=black] (\x cm,0) circle (3 mm);
    \foreach \y in {0,...,4}
    \draw[thick,xshift=\y cm] (\y cm,0) ++(.3 cm, 0) -- +(14 mm,0);
    \draw[thick, fill=black] (2 cm,1 cm) circle (3 mm);
    \draw[thick] (2 cm, 3mm) -- +(0, 0.7 cm);
    \draw[thick, fill=black] (8 cm,1 cm) circle (3 mm);
    \draw[thick] (8 cm, 3mm) -- +(0, 0.7 cm);
    \end{tikzpicture}
    & $A_{8,(1,5)}^{(3,3)}$\\
\hline 
$\tilde{\mathbb{E}}_7$ &
    \begin{tikzpicture}[scale=.4]
    \foreach \x in {0,...,6}
    \draw[thick,xshift=\x cm, fill=black] (\x cm,0) circle (3 mm);
    \foreach \y in {0,...,5}
    \draw[thick,xshift=\y cm] (\y cm,0) ++(.3 cm, 0) -- +(14 mm,0);
    \draw[thick, fill=black] (6 cm,1 cm) circle (3 mm);
    \draw[thick] (6 cm, 3mm) -- +(0, 0.7 cm);
    \end{tikzpicture}
    & $A_{8,(1)}^{(5)}$, $A_{8,(3)}^{(3)}$, $A_{8,(3)}^{(5)}$ \\
\hline
$P_{(1,1,2)}^{(1,1)}$ &
    \begin{tikzpicture}[scale=.4]
    \foreach \x in {0,...,5}
    \draw[thick,xshift=\x cm, fill=black] (\x cm,0) circle (3 mm);
    \foreach \y in {0,...,4}
    \draw[thick,xshift=\y cm] (\y cm,0) ++(.3 cm, 0) -- +(14 mm,0);
    \draw[thick, fill=black] (2 cm,1 cm) circle (3 mm);
    \draw[thick] (2 cm, 3mm) -- +(0, 0.7 cm);
    \draw[thick, fill=black] (6 cm,1 cm) circle (3 mm);
    \draw[thick] (6 cm, 3mm) -- +(0, 0.7 cm);
    \end{tikzpicture}
    & $A_{8,(1,4)}^{(3,3)}$, $A_{8,(1,4)}^{(3,4)}$, $A_{8,(1,5)}^{(4,3)}$, $A_{8,(2,5)}^{(3,3)}$\\
\hline 
$P_{(1,0,3)}^{(1,1)}$ &
    \begin{tikzpicture}[scale=.4]
    \foreach \x in {0,...,5}
    \draw[thick,xshift=\x cm, fill=black] (\x cm,0) circle (3 mm);
    \foreach \y in {0,...,4}
    \draw[thick,xshift=\y cm] (\y cm,0) ++(.3 cm, 0) -- +(14 mm,0);
    \draw[thick, fill=black] (2 cm,1 cm) circle (3 mm);
    \draw[thick] (2 cm, 3mm) -- +(0, 0.7 cm);
    \draw[thick, fill=black] (4 cm,1 cm) circle (3 mm);
    \draw[thick] (4 cm, 3mm) -- +(0, 0.7 cm);
    \end{tikzpicture}
    & $A_{8,(1,3)}^{(3,3)}$, $A_{8,(1,3)}^{(3,5)}$, $A_{8,(1,5)}^{(5,3)}$, $A_{8,(3,5)}^{(3,3)}$\\
\hline 
$P_{(1,0,0,1)}^{(1,1,1)}$ &
    \begin{tikzpicture}[scale=.4]
    \foreach \x in {0,...,4}
    \draw[thick,xshift=\x cm, fill=black] (\x cm,0) circle (3 mm);
    \foreach \y in {0,...,3}
    \draw[thick,xshift=\y cm] (\y cm,0) ++(.3 cm, 0) -- +(14 mm,0);
    \draw[thick, fill=black] (2 cm,1 cm) circle (3 mm);
    \draw[thick] (2 cm, 3mm) -- +(0, 0.7 cm);
    \draw[thick, fill=black] (4 cm,1 cm) circle (3 mm);
    \draw[thick] (4 cm, 3mm) -- +(0, 0.7 cm);
    \draw[thick, fill=black] (6 cm,1 cm) circle (3 mm);
    \draw[thick] (6 cm, 3mm) -- +(0, 0.7 cm);
    \end{tikzpicture}
    & $A_{8,(1,3,5)}^{(3,3,3)}$\\
\hline 
$P_{(2,0,2)}^{(1,1)}$ &
    \begin{tikzpicture}[scale=.4]
    \foreach \x in {0,...,5}
    \draw[thick,xshift=\x cm, fill=black] (\x cm,0) circle (3 mm);
    \foreach \y in {0,...,4}
    \draw[thick,xshift=\y cm] (\y cm,0) ++(.3 cm, 0) -- +(14 mm,0);
    \draw[thick, fill=black] (4 cm,1 cm) circle (3 mm);
    \draw[thick] (4 cm, 3mm) -- +(0, 0.7 cm);
    \draw[thick, fill=black] (6 cm,1 cm) circle (3 mm);
    \draw[thick] (6 cm, 3mm) -- +(0, 0.7 cm);
    \end{tikzpicture}
    & $A_{8,(1,4)}^{(4,3)}$, $A_{8,(1,4)}^{(4,4)}$, $A_{8,(2,4)}^{(3,3)}$, $A_{8,(2,4)}^{(3,4)}$\\
\hline 
$P_{(1,0,2)}^{(1,2)}$ &
    \begin{tikzpicture}[scale=.4]
    \foreach \x in {0,...,4}
    \draw[thick,xshift=\x cm, fill=black] (\x cm,0) circle (3 mm);
    \foreach \y in {0,...,3}
    \draw[thick,xshift=\y cm] (\y cm,0) ++(.3 cm, 0) -- +(14 mm,0);
    \draw[thick, fill=black] (2 cm,1 cm) circle (3 mm);
    \draw[thick] (2 cm, 3mm) -- +(0, 0.7 cm);
    \draw[thick, fill=black] (4 cm,1 cm) circle (3 mm);
    \draw[thick, fill=black] (4 cm,2 cm) circle (3 mm);
    \draw[thick] (4 cm, 3mm) -- +(0, 1.7 cm);
    \end{tikzpicture}
    & $A_{8,(1,3)}^{(3,4)}$, $A_{8,(2,5)}^{(4,3)}$\\
\hline 
$P_{(2,3)}^{(2)}$ &
    \begin{tikzpicture}[scale=.4]
    \foreach \x in {0,...,5}
    \draw[thick,xshift=\x cm, fill=black] (\x cm,0) circle (3 mm);
    \foreach \y in {0,...,4}
    \draw[thick,xshift=\y cm] (\y cm,0) ++(.3 cm, 0) -- +(14 mm,0);
    \draw[thick, fill=black] (4 cm,1 cm) circle (3 mm);
    \draw[thick, fill=black] (4 cm,2 cm) circle (3 mm);
    \draw[thick] (4 cm, 3mm) -- +(0, 1.7 cm);
    \end{tikzpicture}
    & $A_{8,(2)}^{(4)}$, $A_{8,(2)}^{(5)}$, $A_{8,(3)}^{(4)}$\\
\hline 
\end{tabular}
\caption{All quipus of order $\leq 8$, with corresponding derived equivalent Nakayama algebras (not including relations of length $2$).}
\end{figure}

\printbibliography

\end{document}